\definecolor{ForestGreen}{rgb}{0.1,0.6,0.05}
\definecolor{EgyptBlue}{rgb}{0.063,0.1,0.6}
\definecolor{RipeOlive}{HTML}{556B2F}
\newtheorem{theorem}{Theorem}
\newtheorem{proposition}[theorem]{Proposition}
\newtheorem{lemma}[theorem]{Lemma}
\theoremstyle{definition}
\newtheorem{remark}[theorem]{Remark}
\numberwithin{equation}{section}
\numberwithin{theorem}{section}
\numberwithin{equation}{section}
\numberwithin{theorem}{section}
\newcommand{\W}{W_0^{1,p}(\Omega)} 
\newcommand{\intO}{\int_\Omega}
\title{
	\vspace*{-1cm}
	Generalized Picone inequalities and their applications to $(p,q)$-Laplace equations} 
\author{ 
	\normalsize Vladimir Bobkov\\ 
	{\small  Department of Mathematics and NTIS, Faculty of Applied Sciences, University of West Bohemia}\\ 
	{\small Univerzitn\'i 8, 301 00 Plze\v{n}, Czech Republic}\\
	{\small  Institute of Mathematics, Ufa Federal Research Centre, RAS}\\ 
	{\small Chernyshevsky str.\ 112, 450008 Ufa, Russia}\\
	{\small e-mail: bobkov@kma.zcu.cz}\\[0.5em] 	
	\normalsize Mieko Tanaka\\
	{\small Department of  Mathematics, 
		Tokyo University of Science}\\
	{\small Kagurazaka 1-3, Shinjyuku-ku, Tokyo 162-8601, Japan}\\
	{\small e-mail: miekotanaka@rs.tus.ac.jp} 
}
\date{}
\begin{document}
\maketitle 
	
\begin{abstract} 
	
	We obtain a generalization of the Picone inequality which, in combination with the classical Picone inequality, appears to be useful for problems with the $(p,q)$-Laplace type operators. 
	With its help, as well as with the help of several other known generalized Picone inequalities, we provide some nontrivial facts on the existence and nonexistence of positive solutions to the zero Dirichlet problem for the equation $-\Delta_p u -\Delta_q u = f_\mu(x,u,\nabla u)$ in a bounded domain $\Omega \subset \mathbb{R}^N$ under certain assumptions on the nonlinearity and with a special attention to the resonance case $f_\mu(x,u,\nabla u) = \lambda_1(p) |u|^{p-2} u + \mu |u|^{q-2} u$, where $\lambda_1(p)$ is the first eigenvalue of the $p$-Laplacian. 	
				
	\par
	\smallskip
	\noindent {\bf  Keywords}:\ Picone inequality, Picone identity, $(p,q)$-Laplacian, nonexistence, positive solutions.
	
	\par
	\smallskip
	\noindent {\bf  MSC2010}: \ 35J62, 35J20, 35P30, 35A01
\end{abstract}

\section{Picone inequalities} 
Throughout this section, we denote by $\Omega$ a nonempty connected open set in $\mathbb{R}^N$, $N \geq 1$. 
The nowadays classical version of the \textit{Picone inequality} (also commonly referred to as the \textit{Picone identity}) for the $p$-Laplacian can be stated as follows.
\begin{theorem}[\!\protect{\cite[Theorem 1.1]{Alleg}}]\label{thm:picone}
	Let $p>1$ and let $u, v$ be differentiable functions in $\Omega$ such that $u > 0$, $v \geq 0$.
	Then 
	\begin{equation}\label{eq:picone0}
	|\nabla u|^{p-2} \nabla u \nabla \left(\frac{v^{p}}{u^{p-1}}\right) 
	\leq
	|\nabla v|^{p}.
	\end{equation}
	Moreover, the equality in \eqref{eq:picone0} is satisfied in $\Omega$ if and only if 
	$u \equiv kv$ for some constant $k > 0$.
\end{theorem}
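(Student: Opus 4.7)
The plan is to prove the inequality by a direct computation combined with a clever application of Young's inequality, and then extract the equality case by tracking when each inequality is saturated.

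\textbf{Step 1: Expand the left-hand side.} Using the quotient rule, I compute
\[
\nabla\!\left(\frac{v^{p}}{u^{p-1}}\right)
= p\,\frac{v^{p-1}}{u^{p-1}}\,\nabla v \;-\; (p-1)\,\frac{v^{p}}{u^{p}}\,\nabla u,
\]
so that
\[
|\nabla u|^{p-2}\nabla u \cdot \nabla\!\left(\frac{v^{p}}{u^{p-1}}\right)
= p\,\frac{v^{p-1}}{u^{p-1}}\,|\nabla u|^{p-2}\nabla u\cdot\nabla v
\;-\; (p-1)\,\frac{v^{p}}{u^{p}}\,|\nabla u|^{p}.
\]
This reduces the claim to showing that the first (cross) term can be absorbed into $|\nabla v|^{p}$ plus the positive quantity $(p-1)\,v^{p}u^{-p}|\nabla u|^{p}$.

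\textbf{Step 2: Cauchy--Schwarz, then Young.} I bound the cross term crudely by Cauchy--Schwarz,
\[
|\nabla u|^{p-2}\,\nabla u\cdot\nabla v \;\le\; |\nabla u|^{p-1}\,|\nabla v|,
\]
and then apply Young's inequality to $a=|\nabla v|$ and $b=(v/u)^{p-1}|\nabla u|^{p-1}$ with exponents $p$ and $p/(p-1)$:
\[
p\,\frac{v^{p-1}}{u^{p-1}}\,|\nabla u|^{p-1}\,|\nabla v|
\;\le\; |\nabla v|^{p} + (p-1)\,\frac{v^{p}}{u^{p}}\,|\nabla u|^{p}.
\]
Combining the two displays yields exactly \eqref{eq:picone0}. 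On the set $\{v=0\}$ the left-hand side of \eqref{eq:picone0} is $0$ (the bracketed factor and its gradient both vanish suitably), so the inequality is trivial there.

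\textbf{Step 3: Equality case.} Equality in \eqref{eq:picone0} forces equality in both Cauchy--Schwarz and Young simultaneously on $\{v>0\}$. Cauchy--Schwarz equality gives $\nabla v = t\,\nabla u$ with $t\ge 0$, and Young equality gives $|\nabla v| = (v/u)|\nabla u|$, so $t = v/u$. Hence $u\,\nabla v = v\,\nabla u$, i.e.\ $\nabla(v/u)\equiv 0$ on the open set $\{v>0\}$. Since $\Omega$ is connected and $u>0$, the ratio $v/u$ is constant on each connected component of $\{v>0\}$; if $v$ vanished somewhere, the relation $v = (1/k)u$ together with $u>0$ would give a contradiction, so $v>0$ on all of $\Omega$ and $u\equiv kv$ for some $k>0$. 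The converse (that $u=kv$ yields equality) is immediate from Step 1.

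\textbf{Main obstacle.} The analytic content is routine once Young's inequality is invoked with the right exponents; the only delicate point is the equality discussion at points where $v$ vanishes, which must be handled by a connectedness argument to rule out the case $v\not\equiv 0$ but $v=0$ somewhere.
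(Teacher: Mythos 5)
Your proof is correct. The paper does not prove this theorem at all --- it is quoted verbatim from Allegretto--Huang \cite{Alleg} --- and your argument (expand $\nabla(v^p/u^{p-1})$, bound the cross term by Cauchy--Schwarz and then Young with exponents $p$ and $p/(p-1)$, and track the two equality cases) is precisely the standard proof given in that reference, including the connectedness argument needed to propagate $v=ku$ from $\{v>0\}$ to all of $\Omega$. The only pedantic caveat, which concerns the statement rather than your proof, is the degenerate case $v\equiv 0$, where equality holds trivially but $u\equiv kv$ fails; this is implicitly excluded in the usual formulation.
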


In the linear case $p=2$, inequality \eqref{eq:picone0} is a direct consequence of the simple identity 
\begin{equation}\label{eq:picone-original}
\nabla u \nabla \left(\frac{v^{2}}{u}\right) = |\nabla v|^2 - \left|\nabla v - \frac{v}{u}\nabla u\right|^2
\end{equation}
whose one-dimensional version was used by \textsc{M.~Picone} in \cite[Section 2]{picone} to prove the Sturm comparison theorem. 
Subsequently, due to the nontrivial and convenient choice of the test function $\frac{v^{p}}{u^{p-1}}$, identity \eqref{eq:picone-original} and inequality \eqref{eq:picone0} appeared to be effective in the study of many other properties of various ordinary and partial differential equations and systems of both linear and nonlinear nature. 
In particular, one can mention the uniqueness and nonexistence of positive solutions, Hardy type inequalities, bounds on eigenvalues, Morse index estimates, etc. 
Such a wide range of applications particularly motivated a search of reasonable generalizations of the Picone inequality, see, e.g., the works \cite{AGW,bal,BobkovTanaka2015,Bognar,BF,ilyas,Jaros,tir,Tyagi}, although this list is far from being comprehensive.

On the other hand, during the last few decades, there has been growing interest in the investigation of various composite type operators such as the sum of the $p$- and $q$-Laplacians with $p \neq q$, the so-called \textit{$(p,q)$-Laplacian}. 
The motivation for corresponding studies comes from both the intrinsic mathematical interest and applications in natural sciences, see, for instance, \cite{agudelo,BDP,BCM,benci,CI,FMM,marcomasconi,yang} and references therein, to mention a few.
Clearly, most of the properties indicated above can be posed for problems with such operators, too. 
It is then natural to ask which generalizations of the Picone inequality are favourable to be applied to the $(p,q)$-Laplacian. 
If one tries to use $\frac{v^p}{u^{p-1}}$ or $\frac{v^q}{u^{q-1}}$ as a test function, then, taking into account \eqref{eq:picone0}, the following two quantities have to be estimated:
$$
|\nabla u|^{p-2} \nabla u \nabla \left(\frac{v^{q}}{u^{q-1}}\right) 
\quad\text{and}\quad
|\nabla u|^{q-2} \nabla u \nabla \left(\frac{v^{p}}{u^{p-1}}\right).
$$
There are at least two known generalized Picone inequalities in this regard.
The first one was obtained in \cite{BF}, where its equivalence to two convexity principles for variational integrals is also shown. 
Its particular form can be stated as follows.
\begin{theorem}[\!\protect{\cite[Proposition 2.9 and Remark 2.10]{BF}}]\label{thm:B}
	Let $p,q>1$ and let $u, v$ be differentiable functions in $\Omega$ such that $u > 0$, $v \geq 0$.
	If $q \leq p$, then 
	\begin{equation}\label{eq:piconeB}
	|\nabla u|^{p-2} \nabla u \nabla \left(\frac{v^{q}}{u^{q-1}}\right) 
	\leq
	\frac{q}{p}|\nabla v|^{p} + \frac{p-q}{p}|\nabla u|^{p}.
	\end{equation}
\end{theorem}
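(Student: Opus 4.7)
My strategy is an expand-and-bound argument: differentiate $v^q/u^{q-1}$ via the Leibniz rule, control the resulting cross term by Cauchy-Schwarz, and then apply Young's inequality twice with carefully chosen conjugate exponents. The first application is designed to absorb the negative ``Picone-type remainder'' $-(q-1)(v/u)^q|\nabla u|^p$ and reduce the bound to the mixed product $|\nabla u|^{p-q}|\nabla v|^q$; the second Young step separates this mixed product into $|\nabla u|^p$ and $|\nabla v|^p$ with exactly the coefficients $(p-q)/p$ and $q/p$ demanded by \eqref{eq:piconeB}.

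Concretely, writing $\tau=v/u\geq 0$ (well-defined since $u>0$), the Leibniz rule gives
\[
\nabla\!\left(\frac{v^q}{u^{q-1}}\right)=q\tau^{q-1}\nabla v-(q-1)\tau^q\nabla u,
\]
so
\[
|\nabla u|^{p-2}\nabla u\cdot \nabla\!\left(\frac{v^q}{u^{q-1}}\right)=q\tau^{q-1}|\nabla u|^{p-2}\nabla u\cdot\nabla v-(q-1)\tau^q|\nabla u|^p.
\]
Cauchy-Schwarz combined with $q\tau^{q-1}\geq 0$ bounds the first summand above by $q\tau^{q-1}|\nabla u|^{p-1}|\nabla v|$. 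Assuming $|\nabla u|\neq 0$ (the case $\nabla u=0$ is immediate, since the left-hand side then vanishes while the right-hand side is nonnegative), I factor out $|\nabla u|^{p-q}$; after introducing $a=\tau|\nabla u|$ and $b=|\nabla v|$, the resulting upper bound reads $|\nabla u|^{p-q}\bigl(qa^{q-1}b-(q-1)a^q\bigr)$.

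Young's inequality with conjugate exponents $q/(q-1)$ and $q$ applied to the product $a^{q-1}\cdot b$ yields $qa^{q-1}b\leq (q-1)a^q+b^q$, so the bracket is at most $b^q$ and the whole upper bound is at most $|\nabla u|^{p-q}|\nabla v|^q$. A second Young's inequality, with conjugate exponents $p/(p-q)$ and $p/q$ applied to $|\nabla u|^{p-q}\cdot|\nabla v|^q$, then produces exactly $\tfrac{p-q}{p}|\nabla u|^p+\tfrac{q}{p}|\nabla v|^p$, which is \eqref{eq:piconeB}. When $q=p$ the second Young step is vacuous and the estimate degenerates to the classical Picone inequality \eqref{eq:picone0}. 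The only real subtlety is to reverse-engineer the two Young pairs so that their weights conspire to match the prescribed coefficients; once this choice is identified the proof is essentially bookkeeping, and no regularity of $|\nabla u|^{p-2}\nabla u$ itself is needed since the estimate is purely pointwise.
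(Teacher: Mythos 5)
Your argument is correct. Note first that the paper itself does not prove this statement: Theorem \ref{thm:B} is quoted verbatim from \cite[Proposition 2.9 and Remark 2.10]{BF}, where it is derived from a ``hidden convexity'' principle for the Dirichlet integral along curves of the form $t \mapsto \bigl((1-t)u^q + t v^q\bigr)^{1/q}$. So there is no in-paper proof to compare against, and what you supply is a self-contained, purely pointwise alternative. Your chain of estimates checks out: the Leibniz expansion and Cauchy--Schwarz give
\begin{equation*}
|\nabla u|^{p-2}\nabla u\cdot\nabla\!\left(\frac{v^q}{u^{q-1}}\right)\;\leq\; q\,\tau^{q-1}|\nabla u|^{p-1}|\nabla v|-(q-1)\,\tau^{q}|\nabla u|^{p},\qquad \tau=\tfrac{v}{u},
\end{equation*}
and with $a=\tau|\nabla u|$, $b=|\nabla v|$ the first Young step ($q\,a^{q-1}b\leq (q-1)a^{q}+b^{q}$) yields the intermediate bound $|\nabla u|^{p-q}|\nabla v|^{q}$, which is in fact the sharper form stated in \cite[Proposition 2.9]{BF}; the second Young step with exponents $\tfrac{p}{p-q}$ and $\tfrac{p}{q}$ then reproduces \cite[Remark 2.10]{BF} and gives exactly the right-hand side of \eqref{eq:piconeB}. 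The edge cases ($\nabla u=0$, $q=p$) are handled correctly, and the nonnegativity of the prefactors $q\tau^{q-1}$ and $|\nabla u|^{p-q}$, which is what licenses multiplying the scalar inequalities through, is in place. What your route buys is elementarity and locality (everything is an inequality between real numbers at a single point, structurally parallel to the standard proof of the classical Picone inequality \eqref{eq:picone0}); what the convexity route of \cite{BF} buys is the equivalence with variational convexity principles and a cleaner path to the equality analysis mentioned in the remark following Theorem \ref{thm:B}, which your Cauchy--Schwarz-plus-Young argument would require a little extra care to recover (equality forces $\nabla u$ parallel to $\nabla v$ and $a=b$ simultaneously).
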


\begin{remark}
	Although the case of equality in \eqref{eq:piconeB} is not discussed in \cite{BF}, one can show that if $q<p$, then the equality in \eqref{eq:piconeB} is satisfied in $\Omega$ if and only if $u \equiv v$.
\end{remark}

The second generalization of \eqref{eq:picone0} was obtained in \cite{ilyas} in the context of study of an equation with indefinite nonlinearity. 
Later, this result was also applied in \cite{BobkovTanaka2015} to an eigenvalue problem for the $(p,q)$-Laplacian. 
\begin{theorem}[\!\protect{\cite[Lemma 1]{ilyas}}]\label{thm:Il}
	Let $p,q>1$ and let $u, v$ be differentiable functions in $\Omega$ such that $u > 0$, $v \geq 0$.
	If $p \leq q$, then 
	\begin{equation}\label{eq:piconeIl}
	|\nabla u|^{p-2} \nabla u \nabla \left(\frac{v^{q}}{u^{q-1}}\right) 
	\leq
	|\nabla v|^{p-2} \nabla v \nabla \left(\frac{v^{q-p+1}}{u^{q-p}}\right). 
	\end{equation}
	Moreover, the equality in \eqref{eq:piconeIl} is satisfied in $\Omega$ if and only if 
	$u \equiv kv$ for some constant $k > 0$.
\end{theorem}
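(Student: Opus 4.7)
My plan is to reduce \eqref{eq:piconeIl} to a pointwise algebraic inequality in $\nabla u$ and $\nabla v$ and then derive it from the convexity of $x\mapsto |x|^p/p$ on $\mathbb{R}^N$. First I would differentiate the two test functions:
\[
\nabla\!\left(\frac{v^q}{u^{q-1}}\right) = q\,\frac{v^{q-1}}{u^{q-1}}\nabla v - (q-1)\frac{v^q}{u^q}\nabla u,
\]
together with the analogous identity obtained by replacing $q$ by $q-p+1$. At any point where $v=0$, both sides of \eqref{eq:piconeIl} vanish (since $q>1$ and $q-p+1\ge 1$), so it suffices to work on the open set $\{v>0\}$.

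On $\{v>0\}$, I would set $r:=v/u>0$, $a:=r\nabla u$ and $c:=\nabla v$; note that $v=ru$ gives $c-a=u\nabla r$. Substituting the gradients above into \eqref{eq:piconeIl} and dividing by the positive factor $(v/u)^{q-p}$, the inequality becomes equivalent to the purely algebraic statement
\[
q\,|a|^{p-2}a\cdot c + (q-p)\,|c|^{p-2}c\cdot a \;\le\; (q-1)\,|a|^p + (q-p+1)\,|c|^p,
\]
where the reduction relies on the identities $|a|^{p-2}a = r^{p-1}|\nabla u|^{p-2}\nabla u$ and $|a|^p = r^p|\nabla u|^p$.

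The core of the proof is then the elementary Young-type bound $|x|^{p-2}x\cdot y \le \frac{p-1}{p}|x|^p + \frac{1}{p}|y|^p$ for $x,y\in\mathbb{R}^N$ (which is just the convexity of $|\cdot|^p/p$): applying it once with weight $q>0$ at $(x,y)=(a,c)$ and once with weight $q-p\ge 0$ at $(x,y)=(c,a)$ and summing produces the claim. The hypothesis $p\le q$ enters precisely to keep the second weight non-negative. A short arithmetic check, $q(p-1)+(q-p) = p(q-1)$ and $q+(q-p)(p-1) = p(q-p+1)$, confirms that the resulting weighted sum matches the desired right-hand side exactly.

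For the equality case, strict convexity of $|\cdot|^p$ when $p>1$ makes each applied Young bound an equality only when $x=y$; hence pointwise equality in \eqref{eq:piconeIl} forces $a=c$, i.e.\ $u\nabla r=0$, so $\nabla(v/u)\equiv 0$ on $\{v>0\}$. A standard connectedness argument then promotes this to $v=ku$ on all of $\Omega$ for some $k>0$, since any component of $\{v>0\}$ on which $v/u=k$ is both open and closed in $\Omega$. The converse is a direct substitution. I expect the only mild obstacle to be this bookkeeping of the equality case near $\{v=0\}$; everything else is a straightforward combination of the gradient expansion, the scaling substitution $r=v/u$, and the one-line Young bound.
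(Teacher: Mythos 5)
Your proof is correct. Note that the paper does not actually prove Theorem~\ref{thm:Il}: it is quoted from Il'yasov's work, and in the proof of Theorem~\ref{thm:picone-general} the case $p\le q$ is explicitly deferred to that citation. The natural in-paper comparison is therefore with the argument for Theorem~\ref{thm:picone-general} in the range $p>q$, which starts from the same gradient expansions \eqref{eq:pic0}--\eqref{eq:pic1} and the same normalized inequality \eqref{eq:picone1}, but must then split into cases according to the sign of $\nabla u\nabla v$ and reduce everything to the nonnegativity of a one-variable function of $s=\frac{|\nabla u|}{u}\frac{v}{|\nabla v|}$, studied by convexity/concavity analysis. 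You instead keep the vectors and close the estimate with two weighted applications of $|x|^{p-2}x\cdot y\le\frac{p-1}{p}|x|^p+\frac{1}{p}|y|^p$; this works precisely because for $p\le q$ both weights $q$ and $q-p$ are nonnegative, so no case distinction on the sign of $\nabla u\cdot\nabla v$ is needed, and the equality case comes for free from the strict convexity of $t\mapsto|t|^p$. Your arithmetic checks $q(p-1)+(q-p)=p(q-1)$ and $q+(q-p)(p-1)=p(q-p+1)$ are right, and your treatment of the set $\{v=0\}$ is fine (for $p=q$ one additionally uses that $\nabla v=0$ at an interior zero of the nonnegative function $v$, so the surviving term $|\nabla v|^p$ on the right vanishes there too). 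The only loose end is the degenerate case $v\equiv 0$, where equality holds throughout $\Omega$ yet $u\equiv kv$ is impossible; this imprecision is already present in the quoted statement (and in Theorem~\ref{thm:picone} as stated), so it is not a defect of your argument, but your connectedness step does require $\{v>0\}\neq\emptyset$ to get started.
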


\begin{remark}
	For convenience of further applications of \eqref{eq:piconeIl}, we rewrite it, assuming $q \leq p$, as follows:
	\begin{equation}\label{eq:piconeIlx}
	|\nabla u|^{q-2} \nabla u \nabla \left(\frac{v^{p}}{u^{p-1}}\right) 
	\leq
	|\nabla v|^{q-2} \nabla v \nabla \left(\frac{v^{p-q+1}}{u^{p-q}}\right). 
	\end{equation}
\end{remark}

Notice that both \eqref{eq:piconeB} and \eqref{eq:piconeIl} turn to the Picone inequality  \eqref{eq:picone0} when $p=q$. 
Moreover, we emphasize that \eqref{eq:piconeB} requires $q \leq p$, while \eqref{eq:piconeIl} asks for $p \leq q$. 
Our main result, Theorem~\ref{thm:picone-general} below, posits the fact that inequality \eqref{eq:piconeIl} remains valid for some $p>q$, although the set of feasible values of $p$ and $q$ is not of a trivial structure, see Figure \ref{fig1}. 
This set is defined and characterized in the following lemma. 
\begin{lemma}\label{lem:pic}
	Let $q>1$ be fixed. Let the function $g: [0,+\infty) \times (1,+\infty) \to \mathbb{R}$ be defined as
	$$
	g(s;p) = (q-1) s^p + q s^{p-1} - (p-q) s + (q-p+1),
	$$
	let
	\begin{equation*}\label{def:I(q)}  
	I(q) := \{p> 1:~ g(s;p)\ge 0 ~\text{for all}~ s\geq 0\},
	\end{equation*}
	and set $\widetilde{p} = \widetilde{p}(q) := \sup\{p>1: p\in I(q)\}$.
	Then $\max\{2,q\}<\widetilde{p}<q+1$. 
	Moreover, there exists $\widetilde{q} \in (1,2]$ such that the following assertions hold:
	\begin{enumerate}[label={\rm(\roman*)}]
		\item\label{lem:pic:1} if $q < \widetilde{q}$, then there exist $p_* = p_*(q)$ and $p^*=p^*(q)$ satisfying $q < p_* < p^* < 2$ such that $(1,p_*] \cup [p^*,\widetilde{p}] \subset I(q)$ and $(p_*,p^*) \not\subset I(q)$;
		\item\label{lem:pic:3} if $q \geq \widetilde{q}$, then $I(q) = (1,\widetilde{p}]$.
	\end{enumerate}
	Furthermore, if $1<q_1 < q_2$ and $p \in I(q_1)$, then $p \in I(q_2)$, i.e., $I(q_1) \subset I(q_2)$.
	
	In particular, each of the following two explicit assumptions is sufficient to guarantee that $p \in I(q)$:
	\begin{enumerate}[label={\rm(\Roman*)}]
		\item\label{rem:pic:1} $1 < q < p \leq 2$ and $p\le q+q^{p-1}(q-1)^{2-p}$; 
		\item\label{rem:pic:2} $2 \leq p < q+1$ and $(q+1-p)^{p-2} q \geq (p-q)^{p-1}$. 
	\end{enumerate}
\end{lemma}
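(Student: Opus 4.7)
My plan is to view $g$ as a family of functions in $s \in [0,\infty)$ parametrized by $(p,q)$ and to analyze the sign profile of the minimum $m(p) := \inf_{s \geq 0} g(s;p,q)$. The monotonicity $I(q_1) \subset I(q_2)$ is immediate from
\[
\frac{\partial g}{\partial q}(s;p,q) = s^p + s^{p-1} + s + 1 > 0, \quad s \geq 0,
\]
so raising $q$ can only push $g$ upward and enlarge $I(q)$. The two-sided bound on $\widetilde p$ comes from three explicit evaluations: $g(s;q,q) = (q-1)s^q + q s^{q-1} + 1 \geq 1$ and $g(s;2,q) = (q-1)(s+1)^2$ are strictly positive on $[0,\infty)$, while $g(s;q+1,q) = s[(q-1)s^q + q s^{q-1} - 1]$ is strictly negative on a right-neighbourhood of $s=0$. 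Combining this with continuity of $g$ in $p$ and the coercive tail $(q-1)s^p \to +\infty$ as $s\to+\infty$, one obtains $\max\{2,q\} < \widetilde p < q+1$.

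For the dichotomy, I would define
\[
\widetilde q := \inf\{q > 1 : g(s;p,q) \geq 0 \text{ for every } (s,p) \in [0,\infty) \times (1,2]\}
\]
and show $\widetilde q \in (1,2]$. The inequality $\widetilde q \leq 2$ follows from a direct verification that $g(\cdot;p,2) \geq 0$ for every $p \in (1,2]$ (an analysis of the critical equation $\partial_s g = 0$ in the range $p \in (1,2)$, supplemented by the explicit square at $p=2$). The strict inequality $\widetilde q > 1$ follows by exhibiting, for $q$ close to $1$, a pair $(s_0,p_0)$ with $p_0 \in (1,2)$ and $g(s_0;p_0,q) < 0$: moderate $p_0$ together with large $s_0$ works, since when $q-1$ is small the negative linear term $-(p-q)s$ dominates $(q-1)s^p$ on an intermediate range of $s$. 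With $\widetilde q$ in place, the two cases follow from the intermediate value theorem applied to $m(\cdot)$: in case \ref{lem:pic:1} the chain $m(q) \geq 1$, $m(p_0) < 0$ for some $p_0 \in (q,2)$, and $m(2) = q-1 > 0$ produces the two zeros $p_* < p^*$ inside $(q,2)$; in case \ref{lem:pic:3} monotonicity of $m$ on $[2,\widetilde p]$ — itself a consequence of uniqueness of the interior critical point of $g(\cdot;p,q)$ for $p \geq 2$ — yields $I(q) = (1,\widetilde p]$.

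The explicit sufficient conditions \ref{rem:pic:1} and \ref{rem:pic:2} are produced by weighted Young / AM-GM inequalities on well-chosen pairs of terms. For \ref{rem:pic:1}, the weights $(2-p,p-1)$ applied to $(q-1)s^p$ and $q s^{p-1}$ (at $p<2$; the case $p=2$ is direct) yield
\[
(q-1)s^p + q s^{p-1} \geq \frac{(q-1)^{2-p}\, q^{p-1}}{(2-p)^{2-p}(p-1)^{p-1}}\, s;
\]
since $(2-p)^{2-p}(p-1)^{p-1} \leq 1$ (negative entropy), the stronger hypothesis $p \leq q + q^{p-1}(q-1)^{2-p}$ suffices to make $g \geq 0$ after adding the non-negative term $q-p+1$. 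For \ref{rem:pic:2}, the analogous split of $q s^{p-1}$ and $q-p+1$ with weights $\bigl(1/(p-1),(p-2)/(p-1)\bigr)$ produces, upon raising to the $(p-1)$-st power, exactly the condition $(q+1-p)^{p-2} q \geq (p-q)^{p-1}$, after absorbing a multiplicative factor $(p-1)^{p-1}/(p-2)^{p-2} \geq 1$.

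The main difficulty is the structural part of the dichotomy: showing that $I(q)$ splits into exactly the two pieces described and that the transition happens at a single $\widetilde q$ rather than at several disjoint thresholds. Controlling the number of zeros of $m(p)$ reduces to a study of the critical curve $\{g = \partial_s g = 0\}$ in the $(s,p)$-plane and of its projection onto the $p$-axis; the pointwise monotonicity $\partial_q g > 0$ is the central mechanism for transferring this information consistently across values of $q$.
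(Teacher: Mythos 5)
Your treatment of the routine ingredients (the monotonicity $I(q_1)\subset I(q_2)$ via $\partial g/\partial q=s^p+s^{p-1}+s+1>0$, and the evaluations at $p=2$, $p=q$, $p=q+1$ for the bounds on $\widetilde p$) coincides with the paper's. Your derivation of the explicit conditions (I) and (II) by a single weighted AM--GM applied to the pairs $\bigl((q-1)s^p,\,qs^{p-1}\bigr)$ and $\bigl(qs^{p-1},\,q+1-p\bigr)$ is a genuinely different route: the paper instead splits into the regions where $qs^{p-1}-(p-q)s$ is or is not nonnegative and estimates the remaining terms there. Both arrive at the same conditions, and your auxiliary bounds $(2-p)^{2-p}(p-1)^{p-1}\le 1$ and $(p-1)^{p-1}/(p-2)^{p-2}\ge 1$ are correct, so this part is a clean and slightly slicker alternative.

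The dichotomy (i)/(ii) is where your plan has genuine gaps. First, everything hinges on the unproved continuity of $m(p)=\inf_{s\ge0}g(s;p)$: an infimum of continuous functions is only upper semicontinuous, and lower semicontinuity at the two delicate points $p\downarrow q$ and $p\to 2$ is exactly the content of the paper's two compactness arguments (if $g(s_n;p_n)<0$ with $p_n\to q$, then $s_n\to+\infty$ because $(p_n-q)s_n$ is the only negative term and $(p_n-q)\to0$ while the constant term tends to $1$, after which $(q-1)s_n^q$ dominates and gives a contradiction; a bounded-sequence argument near $p=2$ handles the other endpoint). Without this, the chain $m(q)\ge1$, $m(p_0)<0$, $m(2)=q-1>0$ does not yield $p_*>q$ and $p^*<2$; note that the escape of the minimizer to $s=+\infty$ is precisely the mechanism you invoke to get $\widetilde q>1$, so continuity of $m$ cannot be taken for granted. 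Second, your claim that $[2,\widetilde p]\subset I(q)$ follows from monotonicity of $m$ on $[2,\widetilde p]$, ``a consequence of uniqueness of the interior critical point,'' is unsubstantiated: $g$ is not monotone in $p$ for $s>1$ (the $p$-derivative contains $\bigl[(q-1)s^p+qs^{p-1}\bigr]\ln s$), and uniqueness of a critical point in $s$ implies nothing about monotonicity in $p$. The paper's mechanism is elementary and different: $g$ is strictly decreasing in $p$ for $s\in[0,1]$, so $g(s;p)\ge g(s;\widetilde p)\ge0$ there, while for $s\ge1$ and $2\le p\le q+1$ positivity is direct from $qs^{p-1}\ge qs$. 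Third, you never record $(1,q]\subset I(q)$ (immediate, since all terms of $g$ are nonnegative when $p\le q$), which is needed for $(1,p_*]\subset I(q)$ with $p_*>q$; the single value $m(q)\ge1$ does not cover $p<q$. Finally, the ``main difficulty'' you defer to a study of the critical curve $\{g=\partial_sg=0\}$ --- that $I(q)$ consists of exactly two pieces --- is not asserted by the lemma, which only claims the inclusions $(1,p_*]\cup[p^*,\widetilde p]\subset I(q)$ and $(p_*,p^*)\not\subset I(q)$; once the previous points are fixed, taking $p_*$ and $p^*$ to be the infimum and supremum of the open nonempty set $(1,\widetilde p]\setminus I(q)$ settles this, and no analysis of the critical curve is required. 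Likewise, the uniqueness of the threshold $\widetilde q$ is automatic from $I(q_1)\subset I(q_2)$ and is not a difficulty.
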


\begin{figure}[h!]
	\center{\includegraphics[width=0.5\linewidth]{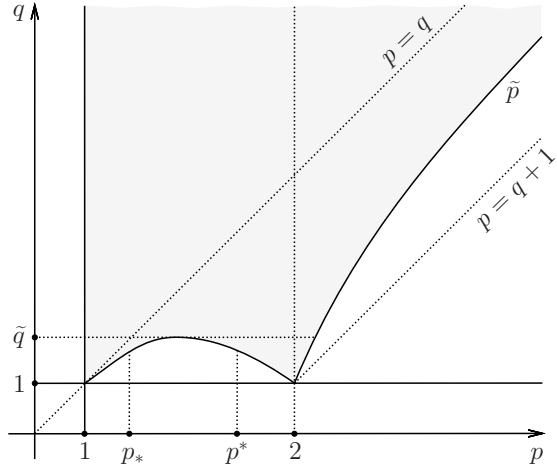}}
	\caption{The grey set schematically depicts the set of points $(p,q)$ with $p \in I(q)$.}
	\label{fig1}
\end{figure}

\begin{remark}
	The assertions \ref{lem:pic:1} and \ref{lem:pic:3} of Lemma \ref{lem:pic} yield $2 \in I(q)$ for any $q>1$. 
	A numerical investigation of the function $g$ indicates that $\widetilde{q} = 1.051633991...$ and that $p_*, p^*$ in the assertion \ref{lem:pic:1} can be chosen such that $(p_*, p^*) \cap I(q) = \emptyset$, that is, if $q < \widetilde{q}$, then $I(q) = (1,p_*] \cup [p^*,\widetilde{p}]$, see Figure \ref{fig1}.
\end{remark}

Now we are ready to state our main result.
\begin{theorem}\label{thm:picone-general}
	Let $p,q>1$ and let $u, v$ be differentiable functions in $\Omega$ such that $u > 0$, $v >0$. 
	Assume that one of the following assumptions is satisfied:
	\begin{enumerate}[label={\rm(\roman*)}]
		\item\label{thm:picone:1} $p \in I(q)$, where $I(q)$ is given by Lemma \ref{lem:pic};
		\item\label{thm:picone:2} $p \leq q+1$ and $\nabla u \nabla v \geq 0$.
	\end{enumerate}
	Then 
	\begin{equation}\label{eq:picone}
	|\nabla u|^{p-2} \nabla u \nabla \left(\frac{v^{q}}{u^{q-1}}\right) 
	\leq
	|\nabla v|^{p-2} \nabla v \nabla \left(\frac{v^{q-p+1}}{u^{q-p}}\right). 
	\end{equation}
	Moreover, if $p < q+1$ and $\nabla u \nabla v \geq 0$, then the equality in \eqref{eq:picone} is satisfied in $\Omega$ if and only if $u \equiv kv$ for some constant $k > 0$.
	
	Furthermore, the assumptions \ref{thm:picone:1} and \ref{thm:picone:2} are optimal in the following sense:
	\begin{enumerate}[label={\rm(\Roman*)}]
		\item\label{thm:picone:1x} if $p \not\in I(q)$, then there exist $u,v$ and a point $x \in \Omega$ such that \eqref{eq:picone} is violated at $x$;
		\item\label{thm:picone:2x} if $p > q+1$, then there exist $u,v$ with $\nabla u \nabla v \geq 0$ and a point $x \in \Omega$ such that \eqref{eq:picone} is violated at $x$.
	\end{enumerate}
\end{theorem}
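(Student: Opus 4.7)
The plan is to reduce \eqref{eq:picone} to a pointwise algebraic inequality and then to a scalar inequality in the magnitude ratio and the angle between $\nabla u$ and $\nabla v$. I would expand the gradients of $v^q/u^{q-1}$ and $v^{q-p+1}/u^{q-p}$, substitute $\tau=v/u$, $W:=\tau\nabla u$, $Y:=\nabla v$, and multiply (RHS$\,-\,$LHS) by $\tau^{p-q}$, reducing the claim to the pointwise statement
\[
(q-p+1)|Y|^p + (q-1)|W|^p \ \geq\ \bigl[q|W|^{p-2}+(q-p)|Y|^{p-2}\bigr]\,W\cdot Y.
\]
By $p$-homogeneity in $(|W|,|Y|)$ I normalize $|Y|=1$, set $s=|W|\geq 0$ and $c=W\cdot Y/|W|\in[-1,1]$, so the inequality becomes $F(s,c):=(q-p+1)+(q-1)s^p-\bigl(qs^{p-1}+(q-p)s\bigr)\,c\geq 0$. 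Since $F$ is affine in $c$, its minimum over the relevant interval is attained at an endpoint: $F(s,-1)=g(s;p)$ (the function of Lemma~\ref{lem:pic}), $F(s,1)=:h(s;p)=(q-1)s^p-qs^{p-1}+(p-q)s+(q-p+1)$, and $F(s,0)=(q-p+1)+(q-1)s^p$. Under assumption \ref{thm:picone:1} all $c\in[-1,1]$ occur, so I need both $g\geq 0$ (the defining property of $I(q)$) and $h\geq 0$; under \ref{thm:picone:2} only $c\in[0,1]$, so I need $h\geq 0$ together with $F(s,0)\geq 0$, the latter immediate from $p\leq q+1$.

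\textbf{The key inequality $h\geq 0$.} The core technical task is to show $h(s;p)\geq 0$ on $[0,\infty)$ whenever $1<p\leq q+1$ --- automatic in (ii) and, in (i), deduced from the bound $\widetilde p<q+1$ in Lemma~\ref{lem:pic}. Direct computation gives $h(1)=h'(1)=0$ and $h''(1)=(p-1)(2q-p)>0$ (using $p\leq q+1\leq 2q$ since $q>1$), and the factorization $h''(s)=(p-1)s^{p-3}\bigl[p(q-1)s-q(p-2)\bigr]$ shows $h''\geq 0$ on all of $[0,\infty)$ when $1<p\leq 2$, so $h$ is convex and the critical point $s=1$ is a global minimum with $h(1)=0$. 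For $p>2$, $h''$ changes sign only at $s^{*}=q(p-2)/(p(q-1))$, which satisfies $s^{*}<1$ (equivalent to $p<2q$); $h$ is thus concave on $[0,s^{*}]$ and convex on $[s^{*},\infty)$. Convexity yields $h\geq h(1)=0$ on $[s^{*},\infty)$, and on $[0,s^{*}]$ the concavity together with $h(0)=q-p+1\geq 0$ and $h(s^{*})\geq h(1)=0$ (which follows from $h$ being decreasing on $[s^{*},1]$) gives $h\geq 0$ there as well. I expect this case split --- especially the sign bookkeeping through the concave-to-convex transition for $p>2$ --- to be the main technical obstacle.

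\textbf{Equality case and optimality.} For the equality claim, under $\nabla u\cdot\nabla v\geq 0$ and $p<q+1$, equality in \eqref{eq:picone} forces $c=1$ and $s=1$ in the reduced scalar inequality, which means $W=Y$, i.e.\ $\nabla(v/u)\equiv 0$ on the connected set $\Omega$, so $u\equiv kv$ for some $k>0$. For optimality \ref{thm:picone:1x}, given $s_0>0$ with $g(s_0;p)<0$ I would take $u(x)=1+s_0(x_1-x_{0,1})$ and $v(x)=1-(x_1-x_{0,1})$ on a small ball around $x_0\in\Omega$, so that at $x_0$ the gradients are antiparallel with $|Y|=1$ and $|W|=s_0$, making $F(s_0,-1)=g(s_0;p)<0$ and violating \eqref{eq:picone} there. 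For \ref{thm:picone:2x}, if $p>q+1$ then $(q-p+1)+(q-1)\varepsilon^p<0$ for sufficiently small $\varepsilon>0$; the choice $u(x)=1+\varepsilon(x_1-x_{0,1})$, $v(x)=1+(x_2-x_{0,2})$ yields perpendicular gradients (so $\nabla u\cdot\nabla v\equiv 0\geq 0$) and gives $F(\varepsilon,0)<0$ at $x_0$, violating the inequality while preserving the sign assumption.
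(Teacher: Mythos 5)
Your proposal is correct and follows essentially the same route as the paper: the reduction lands on the identical pointwise inequality, your $h(s;p)$ and $g(s;p)$ are exactly the paper's two endpoint polynomials (your affine-in-$c$ framing is just a cleaner packaging of the paper's sign case split), the convexity analysis of $h$ matches, and the counterexamples for \ref{thm:picone:1x} and \ref{thm:picone:2x} are the same in substance. The only quibble is that your perpendicular-gradient example for \ref{thm:picone:2x} needs $N\geq 2$ (or $\varepsilon=0$, i.e.\ $u$ constant, which is what the paper uses and works for $N=1$ as well).
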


A closer look at the proof of Theorem \ref{thm:picone-general} \ref{thm:picone:2} reveals that inequality \eqref{eq:picone} remains valid under the assumption \ref{thm:picone:2} also for $q=1$. In fact, even the following stronger result, which reduces to the commutativity of the scalar product in $W^{1,2}(\Omega)$ at $p=2$, can be obtained by the same method of proof.
\begin{proposition}
	Let $u, v$ be differentiable functions in $\Omega$ such that $u > 0$, $v > 0$, and $\nabla u \nabla v \geq 0$. 
	Then the following assertions hold:
	\begin{enumerate}[label={\rm(\roman*)}]
		\item if $p \in (1,2]$, then 
		\begin{equation}\label{eq:picone-rad1}
		|\nabla u|^{p-2} \nabla u \nabla v 
		\leq
		|\nabla v|^{p-2} \nabla v \nabla \left(u^{p-1}v^{2-p}\right);
		\end{equation}
		\item if $p\ge 2$, then 
		\begin{equation}\label{eq:picone-rad2}
		|\nabla u|^{p-2} \nabla u \nabla v 
		\geq 
		|\nabla v|^{p-2} \nabla v \nabla \left(u^{p-1}v^{2-p}\right). 
		\end{equation}
	\end{enumerate}
	Moreover, if $p \neq 2$, then the equality in \eqref{eq:picone-rad1} or \eqref{eq:picone-rad2} is satisfied in $\Omega$ if and only if $u \equiv kv$ for some constant $k > 0$.
\end{proposition}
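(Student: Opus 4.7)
The plan is to reduce both inequalities \eqref{eq:picone-rad1}--\eqref{eq:picone-rad2} pointwise to a family of scalar inequalities. Using
\[
\nabla(u^{p-1}v^{2-p}) = (p-1)u^{p-2}v^{2-p}\,\nabla u + (2-p)u^{p-1}v^{1-p}\,\nabla v
\]
and writing $r:=u/v>0$, the right-hand side of each claim becomes
\[
(p-1)\,r^{p-2}|\nabla v|^{p-2}(\nabla u\cdot\nabla v) + (2-p)\,r^{p-1}|\nabla v|^{p},
\]
while the left-hand side is simply $|\nabla u|^{p-2}(\nabla u\cdot\nabla v)$. At a point where $\nabla v\ne 0$, I would introduce the scalars $s:=|\nabla u|/|\nabla v|\ge 0$ and $t:=(\nabla u\cdot\nabla v)/|\nabla v|^{2}$; the hypothesis $\nabla u\cdot\nabla v\ge 0$ together with Cauchy--Schwarz gives $0\le t\le s$. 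Dividing the difference LHS $-$ RHS by $|\nabla v|^{p}$ then reduces both inequalities to the sign of the scalar expression
\[
F(s,t,r) := \bigl[s^{p-2}-(p-1)r^{p-2}\bigr]\,t - (2-p)\,r^{p-1},
\]
which must be $\le 0$ when $p\le 2$ and $\ge 0$ when $p\ge 2$.

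The key observation is that $F$ is \emph{affine} in $t$, so on the segment $[0,s]$ its values lie between the two endpoints $F(s,0,r)=-(2-p)r^{p-1}$ and $F(s,s,r)$. The endpoint $t=0$ already has exactly the sign required by each of \eqref{eq:picone-rad1}--\eqref{eq:picone-rad2}. At $t=s$, dividing $F(s,s,r)$ by $r^{p-1}$ and setting $\xi:=s/r$ reduces the analysis to the one-variable function
\[
h(\xi) := \xi^{p-1} - (p-1)\xi - (2-p),
\]
for which $h(1)=0$ and $h'(\xi)=(p-1)(\xi^{p-2}-1)$ changes sign precisely at $\xi=1$. Hence $\xi=1$ is a global maximum of $h$ when $1<p<2$ (so $h\le 0$) and a global minimum when $p>2$ (so $h\ge 0$), so that the endpoint value $F(s,s,r)$ too has the sign required by \eqref{eq:picone-rad1}--\eqref{eq:picone-rad2}. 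Since both endpoint values of the affine function $F$ agree in sign, the inequality propagates to all admissible $t\in[0,s]$. Points where $\nabla v = 0$ are handled separately and trivially: both sides of \eqref{eq:picone-rad1}--\eqref{eq:picone-rad2} vanish there under the standard convention $|\nabla v|^{p-2}\nabla v = 0$.

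For the equality case with $p\ne 2$, the endpoint $F(s,0,r)=-(2-p)r^{p-1}$ now contributes a \emph{strictly} nonzero value, and since $F$ is an affine interpolation between a strict and a non-strict endpoint, equality in $F$ at a point where $\nabla v\ne 0$ forces $t=s$ (saturation of Cauchy--Schwarz: $\nabla u$ and $\nabla v$ are parallel and similarly oriented) and $\xi=1$ (i.e., $|\nabla u|=(u/v)|\nabla v|$). Together these amount to $\nabla u=(u/v)\nabla v$, equivalently $\nabla(u/v)\equiv 0$, at every such point; a short continuity and connectedness argument then extends this identity to all of $\Omega$, yielding $u\equiv kv$ for some $k>0$. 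I do not anticipate any serious obstacle: once the scalar reduction is set up, everything rests on elementary monotonicity of $h$ at its unique critical point. The only delicate part is the bookkeeping at points of vanishing gradient, which is handled by the standard convention and by continuity of $u/v$.
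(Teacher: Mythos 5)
Your argument is correct and is essentially the paper's own: the Proposition is proved there by the same pointwise reduction used for Theorem \ref{thm:picone-general} \ref{thm:picone:2} (expand both sides, divide by $v^qu^{p-q}$, split on the sign of the coefficient of $\nabla u\cdot\nabla v$ --- which is exactly your affine-in-$t$ endpoint check --- and reduce to the one-variable inequality \eqref{eq:pic:f}), and at $q=1$ the function $f$ there is precisely $-h$, so your monotonicity analysis of $h'$ matches the paper's convexity analysis of $f$. The one caveat is that your ``continuity and connectedness'' step in the equality case yields nothing at points where $\nabla v=0$ (both sides vanish there for arbitrary $u$, so e.g.\ a constant $v$ defeats the claimed rigidity), but this degeneracy is inherent in the statement itself and is not addressed by the paper either, which only asserts the Proposition follows ``by the same method of proof.''
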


\bigskip
Apart from the choice of $\frac{v^p}{u^{p-1}}$ or $\frac{v^q}{u^{q-1}}$ as a test function, one could also consider more general test functions of the form $\frac{v^p}{f(u)}$ or $\frac{v^q}{f(u)}$.
In this direction, the following partial case of a generalized Picone inequality obtained in \cite{tir} by applying an inequality from \cite[Lemma 2.1]{Jaros} can be effectively used.
\begin{theorem}[\!\protect{\cite[Theorem 2.2]{tir}}]\label{thm:Tir}
	Let $p>1$ and let $u, v$ be differentiable functions in $\Omega$ such that $u > 0$, $v \geq 0$.
	Assume that	$f \in C^1(0,+\infty)$ satisfies $f(s), f'(s)>0$ for all $s \in (0,+\infty)$. 
	Then
	\begin{equation}\label{eq:piconeTir}
	|\nabla u|^{p-2} \nabla u \nabla \left(\frac{v^{p}}{f(u)}\right) 
	\leq
	(p-1)^{p-1}\frac{f(u)^{p-2}}{f'(u)^{p-1}} |\nabla v|^p.
	\end{equation}
	Moreover, the equality in \eqref{eq:piconeTir} is satisfied in $\Omega$ if and only if 
	$f(u) \equiv k v^{p-1}$ for some constant $k > 0$.	
\end{theorem}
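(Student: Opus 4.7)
The plan is to prove \eqref{eq:piconeTir} by expanding the gradient on the left and then applying, in sequence, the Cauchy--Schwarz inequality and a carefully calibrated Young inequality. Using the quotient rule,
\begin{equation*}
\nabla\left(\frac{v^p}{f(u)}\right) = \frac{p v^{p-1}}{f(u)}\nabla v - \frac{v^p f'(u)}{f(u)^2}\nabla u,
\end{equation*}
so the left-hand side of \eqref{eq:piconeTir} equals
\begin{equation*}
\frac{p v^{p-1}}{f(u)} |\nabla u|^{p-2}\nabla u\cdot\nabla v - \frac{v^p f'(u)}{f(u)^2} |\nabla u|^p.
\end{equation*}
By Cauchy--Schwarz one has $|\nabla u|^{p-2}\nabla u\cdot\nabla v \leq |\nabla u|^{p-1}|\nabla v|$, which reduces the problem to an algebraic inequality in the scalars $a:=|\nabla u|$ and $b:=|\nabla v|$.

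Next I would apply Young's inequality $AB\leq \tfrac{p-1}{p}A^{p/(p-1)}+\tfrac{1}{p}B^p$ with the specific choice $A=\alpha a^{p-1}$ and $B=\tfrac{p v^{p-1} b}{\alpha f(u)}$, whose product is exactly $\tfrac{p v^{p-1}}{f(u)} a^{p-1}b$. The free parameter $\alpha>0$ is then pinned down by requiring that the $a^p$-term produced by Young exactly cancels the subtracted term $\tfrac{v^p f'(u)}{f(u)^2} a^p$, that is,
\begin{equation*}
\alpha^{p/(p-1)} = \frac{p v^p f'(u)}{(p-1) f(u)^2}.
\end{equation*}
A short substitution then shows that the remaining $b^p$-coefficient is exactly $(p-1)^{p-1} f(u)^{p-2}/f'(u)^{p-1}$, which is the desired right-hand side of \eqref{eq:piconeTir}. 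This calibrated Young step is essentially the content of the parametric inequality of \cite[Lemma~2.1]{Jaros} invoked in the original derivation.

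For the equality analysis, I would track when each of the two estimates above is sharp. Cauchy--Schwarz equality forces $\nabla u$ and $\nabla v$ to be parallel with nonnegative ratio, while equality in Young requires $A^{p/(p-1)}=B^p$, which with the chosen $\alpha$ reduces to $(p-1)f(u)|\nabla v| = v f'(u)|\nabla u|$. Combining these gives
\begin{equation*}
\nabla v = \frac{v f'(u)}{(p-1) f(u)}\nabla u, \quad\text{i.e.,}\quad \nabla\!\left[(p-1)\log v - \log f(u)\right]=0
\end{equation*}
on $\{v>0\}$, whence connectedness of $\Omega$ together with positivity of $f$ forces $f(u)\equiv k v^{p-1}$ for a single constant $k>0$; the converse is immediate by direct substitution. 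Points where $v=0$ cause no trouble, since differentiability together with $v\geq 0$ forces $\nabla v=0$ there, making both sides of \eqref{eq:piconeTir} vanish. The only nontrivial step is locating the exact value of $\alpha$ that makes the $a^p$-terms cancel; everything else is mechanical.
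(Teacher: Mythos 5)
The paper does not prove Theorem~\ref{thm:Tir}: it is quoted from \cite{tir}, where it is derived by applying the parametric inequality of \cite[Lemma 2.1]{Jaros}, which is exactly the calibrated Young step you isolate. Your Cauchy--Schwarz plus Young argument is correct --- the choice $\alpha^{p/(p-1)}=p v^{p}f'(u)/\bigl((p-1)f(u)^{2}\bigr)$ does cancel the $|\nabla u|^{p}$-term and leaves precisely the coefficient $(p-1)^{p-1}f(u)^{p-2}/f'(u)^{p-1}$, and your equality analysis (parallel gradients with nonnegative ratio together with $(p-1)f(u)|\nabla v|=v f'(u)|\nabla u|$, then positivity of $f$ to rule out an interior zero set of $v$) matches the original derivation, so this is essentially the same proof made self-contained.
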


\begin{remark}
	Let $q>1$.
	Since $v^p = (v^{p/q})^q$, we get from \eqref{eq:piconeTir} the complementary inequality
	\begin{equation}\label{eq:piconeTir2}
	|\nabla u|^{q-2} \nabla u \nabla \left(\frac{v^{p}}{f(u)}\right) 
	\leq
	(q-1)^{q-1}\frac{f(u)^{q-2}}{f'(u)^{q-1}} |\nabla (v^{p/q})|^q.
	\end{equation}
	Notice that the term $|\nabla (v^{p/q})|$ is well-defined if either $q \leq p$ and $v \geq 0$, or $q \neq p$ and $v>0$.
	In particular, under any of these assumptions, taking $f(s)=s^{p-1}$, we obtain
	\begin{equation}\label{eq:piconeTir3}
	|\nabla u|^{q-2} \nabla u \nabla \left(\frac{v^{p}}{u^{p-1}}\right) 
	\leq
	\left(\frac{q-1}{p-1}\right)^{q-1} \left(\frac{p}{q}\right)^q \left(\frac{v}{u}\right)^{p-q} |\nabla v|^q.
	\end{equation}
	Moreover, under the assumption $q<p$, the application of Young's inequality gives
	\begin{equation}\label{eq:piconeTir4}
	|\nabla u|^{q-2} \nabla u \nabla \left(\frac{v^{p}}{u^{p-1}}\right) 
	\leq
	\left(\frac{q-1}{p-1}\right)^{q-1} \left(\frac{p}{q}\right)^q
	\left(\frac{p-q}{p}\left(\frac{v}{u}\right)^{p} + \frac{q}{p} |\nabla v|^p\right).
	\end{equation}
	Evidently, \eqref{eq:piconeTir3} and \eqref{eq:piconeTir4} reduce to the Picone inequality \eqref{eq:picone0} if $q=p$.
\end{remark}

As a complementary fact, we provide the following optimal refinement of a generalized Picone inequality obtained in \cite[Proposition 8]{BobkovTanaka2015}, by analysing the right-hand sides of inequalities \eqref{eq:piconeTir} and \eqref{eq:piconeTir2}. 
\begin{proposition}\label{thm:PiconeBM}
Let $p,q>1$, $\alpha,\beta > 0$, and let $u, v$ be differentiable functions in $\Omega$ such that $u > 0$, $v \geq 0$.
If $q < p$, then 
$$
|\nabla u|^{p-2} \nabla u \nabla \left(\frac{v^p}{\alpha u^{p-1} + \beta u^{q-1}}\right) 
\leq
\frac{1}{\alpha C} |\nabla v|^p 
$$
and 
$$
|\nabla u|^{q-2} \nabla u \nabla \left(\frac{v^p}{\alpha u^{p-1} + \beta u^{q-1}}\right) 
\leq
\frac{1}{\beta}|\nabla (v^{p/q})|^q,
$$
where $C = 1$ if $p \leq q+1$, and $C= \frac{(q-1)^{p-2} (p-q)}{(p-2)^{p-2}}$ if $p \geq q+1$.
\end{proposition}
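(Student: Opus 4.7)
The plan is to apply Theorem~\ref{thm:Tir}, together with its complementary counterpart \eqref{eq:piconeTir2}, to the specific choice $f(s) = \alpha s^{p-1} + \beta s^{q-1}$. This $f$ belongs to $C^1(0,+\infty)$ with $f(s), f'(s) > 0$ for all $s > 0$, so both Picone inequalities are directly applicable. It therefore suffices to establish the two pointwise inequalities
\[
(p-1)^{p-1}\frac{f(u)^{p-2}}{f'(u)^{p-1}} \leq \frac{1}{\alpha C}
\quad\text{and}\quad
(q-1)^{q-1}\frac{f(u)^{q-2}}{f'(u)^{q-1}} \leq \frac{1}{\beta}.
\]

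For the second inequality, I factor $u^{q-1}$ out of $f(u)$ and $u^{q-2}$ out of $f'(u)$; the powers of $u$ cancel entirely, and it remains to show
\[
\beta(q-1)^{q-1}(\alpha s + \beta)^{q-2} \leq ((p-1)\alpha s + (q-1)\beta)^{q-1}, \qquad s := u^{p-q} > 0.
\]
Setting $L := \alpha s + \beta$ and $R := (p-1)\alpha s + (q-1)\beta$, the hypothesis $p > q$ gives $R \geq (q-1) L$; hence $R^{q-1} \geq (q-1)^{q-1} L^{q-1} = (q-1)^{q-1} L \cdot L^{q-2} \geq \beta(q-1)^{q-1} L^{q-2}$, the last step because $L \geq \beta$.

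For the first inequality, the same factorization reduces the task to proving $\alpha C(p-1)^{p-1} s(\alpha s + \beta)^{p-2} \leq ((p-1)\alpha s + (q-1)\beta)^{p-1}$ for all $s > 0$. After the scaling $\tau = \alpha s/\beta$, this is equivalent to $C \leq F(\tau)$ for all $\tau > 0$, where
\[
F(\tau) := \frac{((p-1)\tau + q-1)^{p-1}}{(p-1)^{p-1}\,\tau\,(\tau+1)^{p-2}}.
\]
I then study $F$ through $(\log F)'(\tau)$; setting this to zero gives the unique real candidate $\tau_\ast = (q-1)/[(p-1)(p-q-1)]$. If $p \leq q+1$, then $\tau_\ast \leq 0$, so $F$ has no critical point in $(0,+\infty)$. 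Combined with $F(0^+) = +\infty$ and $F(+\infty) = 1$, this forces $F$ to be strictly decreasing on $(0,+\infty)$ with $\inf F = 1$, so $C=1$ works. If $p > q+1$, then $\tau_\ast > 0$ is the unique critical point, and since $F \to +\infty$ at $0^+$ and $F \to 1$ at $+\infty$, it must be the global minimum; a direct substitution and simplification gives $F(\tau_\ast) = (q-1)^{p-2}(p-q)/(p-2)^{p-2}$, matching the claimed $C$. At the threshold $p = q+1$ both formulas evaluate to $1$, providing a consistency check.

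The main obstacle is the algebraic simplification yielding $F(\tau_\ast) = (q-1)^{p-2}(p-q)/(p-2)^{p-2}$: one must carefully rewrite $(p-1)\tau_\ast + q-1$ and $\tau_\ast + 1$ in terms of $q-1$, $p-1$, and $p-q-1$, and then consolidate the resulting powers into the compact closed form. The optimality of the constant $C$ is then automatic, since $F(\tau_\ast)$ is attained.
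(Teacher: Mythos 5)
Your proof is correct and follows exactly the route the paper indicates: the paper gives no detailed argument for Proposition~\ref{thm:PiconeBM}, stating only that it follows ``by analysing the right-hand sides of inequalities \eqref{eq:piconeTir} and \eqref{eq:piconeTir2}'' with $f(s)=\alpha s^{p-1}+\beta s^{q-1}$, which is precisely your strategy. Your computations (the cancellation of the powers of $u$, the critical point $\tau_*=(q-1)/[(p-1)(p-q-1)]$, and the value $F(\tau_*)=(q-1)^{p-2}(p-q)/(p-2)^{p-2}$) check out and correctly supply the omitted details, including the optimality of $C$.
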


\medskip
Finally, let us note that the Picone inequality \eqref{eq:picone0} can be used to derive the \textit{D\'iaz-Saa inequality} \cite{diazsaa}:
\begin{equation}\label{eq:diazsaa}
\int_\Omega \left(-\frac{\Delta_p w_1}{w_1^{p-1}}+\frac{\Delta_p w_2}{w_2^{p-1}}\right)\left(w_1^p-w_2^p\right)dx \geq 0,
\end{equation}
which, in particular, holds (in the sense of distributions) for all $w_1,w_2 \in W_0^{1,p}(\Omega)$ such that $w_i > 0$ a.e.\ in $\Omega$, and $\frac{w_1}{w_2},\frac{w_2}{w_1} \in L^\infty(\Omega)$, assuming that $\Omega$ is smooth and bounded, see \cite[Theorem 2.5]{TG}.
Inequality \eqref{eq:diazsaa} appeared to be a useful tool in the study of uniqueness of positive solutions to boundary value problems with the $p$-Laplacian.
Its generalization to the $(p,q)$-Laplacian, together with the corresponding applications, was obtained in \cite{FMM}, see also \cite{AGW,TG}.
Under the same assumptions on $w_1,w_2$ and $\Omega$ as above, it can be stated as follows. 
If $1<q<p$ and $\mu>0$, then 
\begin{equation}\label{eq:diazsaa-pq}
\int_\Omega \left(-\frac{\Delta_p w_1+\mu \Delta_q w_1}{w_1^{q-1}}+\frac{\Delta_p w_2 + \mu \Delta_q w_2}{w_2^{q-1}}\right)\left(w_1^q-w_2^q\right)dx \geq 0.
\end{equation}
Inequality \eqref{eq:diazsaa-pq} can be established by applying the generalized Picone inequality \eqref{eq:piconeB}.

\medskip
The rest of the article is organized as follows. In Section \ref{sec:proofs}, we prove Theorem \ref{thm:picone-general} and Lemma \ref{lem:pic}. In Section \ref{sec:applications}, we provide several applications of Theorem \ref{thm:picone-general}, as well as of Theorems \ref{thm:B} and \ref{thm:Il}, to problems with the $(p,q)$-Laplacian.

\section{Proofs of Theorem \ref{thm:picone-general} and Lemma \ref{lem:pic}}\label{sec:proofs}
\begin{proof}[Proof of Theorem \ref{thm:picone-general}]
	Since the case $p \leq q$ is covered by Theorem \ref{thm:Il}, we will assume hereinafter that $p > q$.
	Moreover, under any of the assumptions \ref{thm:picone:1} and \ref{thm:picone:2}, $p$ has the upper bound $p \leq q+1$ (see Lemma \ref{lem:pic} in the case of the assumption \ref{thm:picone:1}).
	
	By straightforward calculations we get
	\begin{equation}\label{eq:pic0}
	|\nabla u|^{p-2} \nabla u \nabla \left(\frac{v^{q}}{u^{q-1}}\right)
	= 
	q |\nabla u|^{p-2} \nabla u \nabla v \left(\frac{v}{u}\right)^{q-1} 
	- 
	(q-1) |\nabla u|^{p} \left(\frac{v}{u}\right)^{q}
	\end{equation}
	and
	\begin{equation}\label{eq:pic1}
	|\nabla v|^{p-2} \nabla v \nabla \left(\frac{v^{q-p+1}}{u^{q-p}}\right)
	= 
	(q-p+1) |\nabla v|^{p} \left(\frac{v}{u}\right)^{q-p} 
	+ 
	(p-q) |\nabla v|^{p-2} \nabla v \nabla u \left(\frac{v}{u}\right)^{q-p+1}.
	\end{equation} 	
	We see from \eqref{eq:pic0} and \eqref{eq:pic1} that the desired inequality \eqref{eq:picone} is equivalent to 
	\begin{align}
	\notag
	q |\nabla u|^{p-2} \nabla u \nabla v \left(\frac{v}{u}\right)^{q-1} 
	&-
	(p-q) |\nabla v|^{p-2} \nabla v \nabla u \left(\frac{v}{u}\right)^{q-p+1}
	\\
	\label{eq:pic2}
	&\leq
	(q-1) |\nabla u|^{p} \left(\frac{v}{u}\right)^{q}
	+
	(q-p+1) |\nabla v|^{p} \left(\frac{v}{u}\right)^{q-p}.
	\end{align}
	Dividing by $v^q u^{p-q}$, we reduce \eqref{eq:pic2} to 
	\begin{align}
	\notag
	\frac{\nabla u \nabla v}{uv} \left(q \left(\frac{|\nabla u|}{u}\right)^{p-2} 
	-
	(p-q) \left(\frac{|\nabla v|}{v}\right)^{p-2} \right)
	\\
	\label{eq:picone1}
	\leq
	(q-1) \left(\frac{|\nabla u|}{u}\right)^{p}
	+
	(q-p+1) \left(\frac{|\nabla v|}{v}\right)^{p}.
	\end{align}
	Recalling that $q - p + 1 \geq 0$, we see that \eqref{eq:picone1} is satisfied if its left-hand side is nonpositive. 
	Therefore, let us assume that the left-hand side of \eqref{eq:picone1} is positive. 
	In particular, we have $\nabla u \nabla v \neq 0$, and hence $|\nabla u|, |\nabla v| > 0$.
	We consider two separate cases.

	1) Suppose that
	\begin{equation*}\label{eq:picone-rec1}
	\nabla u \nabla v > 0
	\quad \text{and} \quad 
	q \left(\frac{|\nabla u|}{u}\right)^{p-2} 
	-
	(p-q) \left(\frac{|\nabla v|}{v}\right)^{p-2} > 0.
	\end{equation*}
	In this case, in order to validate \eqref{eq:picone1} it is sufficient to prove that
	\begin{align}
	\notag
	\frac{|\nabla u| |\nabla v|}{uv} \left(q \left(\frac{|\nabla u|}{u}\right)^{p-2} 
	-
	(p-q) \left(\frac{|\nabla v|}{v}\right)^{p-2} \right)
	\\
	\label{eq:pic3}
	\leq
	(q-1) \left(\frac{|\nabla u|}{u}\right)^{p}
	+
	(q-p+1) \left(\frac{|\nabla v|}{v}\right)^{p}.
	\end{align}
	Denoting $s = \frac{|\nabla u|}{u} \frac{v}{|\nabla v|}$, we see that \eqref{eq:pic3} holds provided 
	\begin{equation}\label{eq:pic:f}
	f(s) := (q-1) s^p - q s^{p-1} + (p-q) s + (q-p+1) \geq 0 \quad \text{for all}~ s \geq 0.
	\end{equation}
	Let us show that \eqref{eq:pic:f} is satisfied.
	We have
	$$
	f''(s) = p(p-1)(q-1)s^{p-2} - q(p-1)(p-2)s^{p-3} > 0 
	~~\text{if and only if}~ 
	s > \max\left\{0,\frac{q(p-2)}{p(q-1)}\right\}.
	$$
	Combining this strict convexity of $f$ with the facts that $f(1)=f'(1)=0$ and $\frac{q(p-2)}{p(q-1)} < 1$, we see that 
	$$
	f(s) \geq 0 
	\quad \text{for all}~ s \geq \max\left\{0,\frac{q(p-2)}{p(q-1)}\right\},
	$$
	and the equality $f(s)=0$ for such $s$ happens if and only if $s=1$. 
	In particular, $f(s) \geq 0$ for all $s \geq 0$ provided $p \leq 2$.
	Assume that $p > 2$.
	Since $f$ is concave on $\left[0,\frac{q(p-2)}{p(q-1)}\right]$, $f\left(\frac{q(p-2)}{p(q-1)}\right)>0$, $f(0)>0$ for $p<q+1$, and $f(0)=0$ for $p=q+1$, we conclude that 
	$$
	f(s) \geq 0 
	\quad \text{for all}~ s \in \left[0,\frac{q(p-2)}{p(q-1)}\right],
	$$
	and the equality $f(s)=0$ for such $s$ happens if and only if $s=0$ and $p=q+1$. 
	Thus, we have derived that $f(s) \geq 0$ for all $s \geq 0$ provided $p \leq q+1$. 
	In particular, this implies that \eqref{eq:picone} is satisfied under the assumption \ref{thm:picone:2}.
	Moreover, we have shown that if $p<q+1$, then $f(s)=0$ if and only if $s=1$. 
	Therefore, if $p<q+1$, $\nabla u \nabla v \geq 0$, and the equality in \eqref{eq:picone} is satisfied in $\Omega$, then we conclude that $\nabla u \nabla v = |\nabla u||\nabla v|$ and $\frac{|\nabla u|}{u} = \frac{|\nabla v|}{v}$, which yields $\nabla\left(\frac{v}{u}\right)=0$ in $\Omega$, that is, $u\equiv kv$ for some constant $k>0$.

	2) Suppose that 
	\begin{equation}\label{eq:picone-rec2}
	\nabla u \nabla v < 0
	\quad \text{and} \quad 
	q \left(\frac{|\nabla u|}{u}\right)^{p-2} 
	-
	(p-q) \left(\frac{|\nabla v|}{v}\right)^{p-2} < 0.
	\end{equation}
	To establish \eqref{eq:picone1} under the assumption \eqref{eq:picone-rec2}, it is sufficient to show that
	\begin{align}
	\notag
	\frac{|\nabla u| |\nabla v|}{uv} \left(-q \left(\frac{|\nabla u|}{u}\right)^{p-2} 
	+
	(p-q) \left(\frac{|\nabla v|}{v}\right)^{p-2} \right)
	\\
	\label{eq:pic4}
	\leq
	(q-1) \left(\frac{|\nabla u|}{u}\right)^{p}
	+
	(q-p+1) \left(\frac{|\nabla v|}{v}\right)^{p}.
	\end{align}
	Introducing again the notation $s = \frac{|\nabla u|}{u} \frac{v}{|\nabla v|}$, we see that inequality \eqref{eq:pic4} holds if 
	\begin{equation}\label{eq:pic6}
	g(s;p) := (q-1) s^p + q s^{p-1} - (p-q) s + (q-p+1) \geq 0 \quad \text{for all}~ s \geq 0.
	\end{equation}
	Applying Lemma \ref{lem:pic}, we deduce that \eqref{eq:pic6} is satisfied whenever $p \in I(q)$. 
	
	Combining the cases 1) and 2), we conclude that \eqref{eq:picone} holds under the assumption \ref{thm:picone:1}, which finishes the proof of the first part of the theorem.
	
	Let us now obtain the optimality of the assumptions \ref{thm:picone:1} and \ref{thm:picone:2} stated in \ref{thm:picone:1x} and \ref{thm:picone:2x}, respectively.
	Assume first that $p \not\in I(q)$ and let $s_0 \geq 0$ be such that $g(s_0;p)<0$.
	Consider
	$$
	u(x_1,\dots,x_N) = 1 - \alpha x_1
	\quad \text{and} \quad
	v(x_1,\dots,x_N) = 1 + x_1
	$$
	for some $\alpha \geq 0$.
	Noting that $\frac{|\nabla u(0)|}{u(0)} \frac{v(0)}{|\nabla v(0)|} = \alpha$ and taking $\alpha = s_0$, we conclude that the violation of \eqref{eq:pic6} at $s_0$ implies the violation of \eqref{eq:pic4} at $x=0$.
	On the other hand, we have $\nabla u \nabla v = -|\nabla u||\nabla v|$. 
	Thus, the violation of \eqref{eq:pic4} at $x=0$ is equivalent to the violation of 
	\eqref{eq:picone1} at $x=0$, which, in its turn, is equivalent to the violation of 
	\eqref{eq:picone} at $x=0$. This establishes the case \ref{thm:picone:1x}.
	
	Assume now that $p > q+1$. Set $u \equiv \text{const} > 0$ and let $v>0$ be any differentiable function not identically equal to a constant. We readily see that $\nabla u \nabla v \equiv 0$ and \eqref{eq:pic2} is violated at points where $|\nabla v| > 0$, which establishes the case \ref{thm:picone:2x}.
\end{proof}

Now we provide the proof of Lemma \ref{lem:pic}. 
\begin{proof}[Proof of Lemma \ref{lem:pic}]	
	To prove that $\max\{2,q\} < \widetilde{p} < q+1$, we first note that
	\begin{align}
	\label{eq:lem:pic:1}
	g(s;2) &= (q-1)s^2 + 2(q-1) s + (q-1) \geq q-1 > 0,\\
	\notag
	g(s;q) &= (q-1)s^q + qs^{q-1}+1 \geq 1,
	\end{align}
	for all $s \geq 0$. 
	This yields $2,q \in I(q)$, and hence $\max\{2,q\} \leq \widetilde{p}$. 
	Second, we have $g(0;p) = q-p+1 < 0$ for any $p > q+1$, which implies that $\widetilde{p} \leq q+1$.
	Third, we see that
	\begin{equation*}\label{eq:lem:pic:3}
	g(s;q+1) = (q-1)s^{q+1} + qs^q -s  < 0
	\quad \text{for all sufficiently small}~ s > 0,	 
	\end{equation*}
	and
	\begin{equation}\label{eq:lem:pic:3x}
	g(s;p) \geq (q-1)s^p + (2q-p) s + (q-p+1) > 0 
	\quad \text{for all}~ s \geq 1 ~\text{and}~ 2 \leq p \leq q+1.
	\end{equation}
	Therefore, since $g$ is uniformly continuous on compact subsets of $[0,1] \times (1,+\infty)$, we conclude that $\max\{2,q\} < \widetilde{p} < q+1$. 
	Moreover, the continuity of $g$ gives $\widetilde{p}\in I(q)$. 
	
	Let us prove the assertions \ref{lem:pic:1} and \ref{lem:pic:3}.
	To this end, we notice that 
	\begin{equation}\label{eq:monotone}
	g(s;p_1) > g(s;p_2) \quad {\rm for\ every}\ s\in[0,1]\quad 
	{\rm provided}\ 1 < p_1 < p_2. 
	\end{equation}
	In view of this monotonicity, inequalities \eqref{eq:lem:pic:1} and \eqref{eq:lem:pic:3x}
	yield $[2,\widetilde{p}]\subset I(q)$.
	At the same time, we see that if $p \leq q$, then
	$$
	g(s;p) \geq (q-1) s^p + q s^{p-1} -(p-q)s \geq (q-1) s^p + q s^{p-1} \geq 0 \quad \text{for all}~ s \geq 0,
	$$
	which shows, in particular,  that $(1,q] \subset I(q)$. 
	Thus, we deduce that $I(q) = (1,\widetilde{p}]$ provided $q \geq 2$. 
	Let us now define the critical value
	$$
	\widetilde{q} = \inf\left\{q>1:~I(q) = (1,\widetilde{p}]\,\right\}.
	$$
	Clearly, $\widetilde{q} \leq 2$.
	Since $q$ is presented in the function $g$ only as a positive coefficient, we see that $p \in I(q_1)$ implies $p \in I(q_2)$ provided $q_1 < q_2$, i.e., $I(q_1) \subset I(q_2)$.
	Combining this fact with the already obtained inclusion $[2,\widetilde{p}]\subset I(q)$, we conclude that if $q > \widetilde{q}$, then $I(q) = (1,\widetilde{p}]$. 
	Notice that if $q$ is sufficiently close to $1$, then $I(q) \neq (1,\widetilde{p}]$, and so $\widetilde{q}>1$. 
	Indeed, choosing, for instance, $q=1.05$, we have $g(60; 1.3)=-	0.417508...$, which yields $1.3 \not\in I(q)$, and hence $\widetilde{q} \geq 1.05 >1$. 
	This implies, by the continuity of $g$, that $I(q) = (1,\widetilde{p}]$ also for $q=\widetilde{q}$, and this completes the proof of the assertion \ref{lem:pic:3}.
	
	Assume now that $1<q<\widetilde{q}$. In particular, recalling that $\widetilde{q} \leq 2$, we have $1<q<2$.
	We start by showing that, in addition to \eqref{eq:lem:pic:3x}, there exists $p^* \in (q,2)$ with the property that $g(s;p) \geq 0$ for all $s \geq 1$ and $p \in [p^*, 2]$. 
	Indeed, suppose, by contradiction, that for any $n \in \mathbb{N}$ one can find $p_n \in (q, 2)$ and $s_n \geq 1$ such that $g(s_n;p_n)<0$, and $p_n \to 2$ as $n \to +\infty$. 
	Then $\{s_n\}$ must be bounded, since otherwise $g(s_n;p_n) \to +\infty$ as $n \to +\infty$. 
	Therefore, passing to the limit along appropriate subsequences of $\{p_n\}$ and $\{s_n\}$, we get a contradiction to \eqref{eq:lem:pic:3x}.
	Thus, the monotonicity \eqref{eq:monotone} 
	in combination with \eqref{eq:lem:pic:1} and \eqref{eq:lem:pic:3x} yields $[p^*, \widetilde{p}] \subset I(q)$, which establishes the existence of $p^*$ from the assertion \ref{lem:pic:1}.
	
	Now we show the existence of $p_* \in (q,2)$ such that $g(s;p) \geq 0$ for all $s \geq 0$ and $p \in (q,p_*]$. 
	Suppose, by contradiction, that for any $n \in \mathbb{N}$ one can find $p_n \in (q,2)$ and $s_n>0$ satisfying $g(s_n;p_n)<0$, and $p_n \to q$ as $n \to +\infty$. 
	Since the term $(p_n-q)s_n$ is the only term in $g(s_n;p_n)$ with negative sign, we conclude that $s_n \to +\infty$ as $n \to +\infty$. 
	But then we deduce that
	$$
	0 > g(s_n;p_n) \geq (q-1) s_n^q + q s_n^{q-1} - (2-q) s_n + (q-1) > 0
	$$
	for all sufficiently large $n$, 
	since $(q-1) s_n^q$ is the leading term as $s_n \to +\infty$, which is impossible. 
	Recalling that $(1,q] \subset I(q)$, we conclude that $(1,p_*] \subset I(q)$.
	Since $q<\widetilde{q}$, we deduce that $q< p_*< p^* < 2$ and $(p_*,p^*) \setminus I(q) \neq \emptyset$, which completes the proof of the assertion \ref{lem:pic:1}.
	
	\bigskip
	Finally, we justify the sufficient assumptions \ref{rem:pic:1} and \ref{rem:pic:2}.
	
	\ref{rem:pic:1}
	Let $1 < q < p \leq 2$. 
	Considering the sum of the second and third terms of $g(s;p)$, we see that if $s^{2-p} \leq \frac{q}{p-q}$, then $g(s;p) \geq 0$. Thus, let us assume that $s^{2-p} > \frac{q}{p-q}$ and $p<2$. Then we have
	\begin{align*}
	g(s;p) 
	\geq 
	(q-1)s^p-(p-q)s
	&=
	s \, \frac{p-q}{q} \left( (q-1)s^{p-1}\left(\frac{q}{p-q}\right) - q\right)
	\\
	&> 
	s \, \frac{p-q}{q} \left( (q-1)\left(\frac{q}{p-q}\right)^{\frac{p-1}{2-p}+1} - q\right) \geq 0,
	\end{align*}
	where the last inequality is satisfied if and only if $p\le q+q^{p-1}(q-1)^{2-p}$.
	
	\ref{rem:pic:2}
	Let $2 \leq p < q+1$. As in the previous case, we see that if $s^{p-2} \geq \frac{p-q}{q}$, then $g(s;p) \geq 0$. Hence, we assume that $s^{p-2} < \frac{p-q}{q}$ and $p>2$.
	Then we have
	$$
	g(s;p) \geq -(p-q)s+(q-p+1) > -\frac{(p-q)^\frac{p-1}{p-2}}{q^\frac{1}{p-2}} +(q-p+1) \geq 0,
	$$
	where the last inequality is satisfied if and only if $(q+1-p)^{p-2} q \geq (p-q)^{p-1}$.
\end{proof}

\section{Applications to \texorpdfstring{$(p,q)$}{(p,q)}-Laplace equations}\label{sec:applications} 
Throughout this section, we always assume that $1<q<p$ and that $\Omega \subset \mathbb{R}^N$ is a smooth bounded domain with boundary $\partial \Omega$, $N \geq 2$. 

Denote by $\|\cdot\|_r$ the standard norm of $L^r(\Omega)$, $1 \le r \le +\infty$. 
Let $\lambda_1(r)$ with $1<r<+\infty$ stand for the first eigenvalue of the Dirichlet $r$-Laplacian in $\Omega$, and let $\varphi_r$ be the corresponding first eigenfunction which we assume to be positive and normalized as $\|\nabla \varphi_r\|_r = 1$.
That is,
$$
\lambda_1(r)=\inf\left\{\frac{\intO |\nabla u|^r\,dx}{\intO |u|^r\,dx}:~ 
u\in W_0^{1,r}(\Omega) \setminus\{0\} \right\} 
\quad {\rm and} \quad 
\lambda_1(r)\|\varphi_r\|_r^r = \|\nabla \varphi_r\|_r^r = 1. 
$$
Notice that $\lambda_1(r)$ is simple and $\varphi_r \in {\rm int}\,C_0^1(\overline{\Omega})_+$, where 
\begin{align*}
C_0^1(\overline{\Omega}) 
&:= 
\left\{
	u \in C^1(\overline{\Omega}):~
	u(x)=0 \text{ for all } x \in \partial \Omega
	\right\},\\
{\rm int}\,C_0^1(\overline{\Omega})_+ 
&:= 
\left\{
u \in C^1_0(\overline{\Omega}):~
u(x)>0 \text{ for all } x \in \Omega,~
\frac{\partial u}{\partial\nu}(x) < 0 \text{ for all } x \in \partial\Omega 
\right\},
\end{align*}
and $\nu$ is the unit exterior normal vector to $\partial \Omega$.  
Finally, for a weight function $m\in L^1(\Omega)$ satisfying $\intO m(x)\varphi_p^q\,dx>0$, 
we define 
\begin{equation}\label{def:beta} 
\beta_*^m = 
\frac{\intO |\nabla \varphi_p|^q\,dx }{\intO m(x)\varphi_p^q\,dx} 
\quad {\rm and} \quad 
\beta_*= 
\frac{\intO |\nabla \varphi_p|^q\,dx }{\intO \varphi_p^q\,dx}. 
\end{equation}
We remark that $\beta_* > \lambda_1(q)$, which follows from the simplicity of $\lambda_1(q)$ and linear independence of $\varphi_p$ and $\varphi_q$, see \cite[Proposition 13]{BobkovTanaka2017}.

\subsection{General problem with \texorpdfstring{$(p,q)$}{(p,q)}-Laplacian}
Consider the boundary value problem
\begin{equation}\label{eq:D}
\left\{
\begin{aligned}
-\Delta_p u -\Delta_q u &= f_\mu(x,u,\nabla u) 
&&\text{in } \Omega, \\
u&=0 &&\text{on } \partial \Omega,
\end{aligned}
\right.
\end{equation}
where the function $f_\mu(x,s,\xi): \Omega \times \mathbb{R} \times \mathbb{R}^N \to \mathbb{R}$ is sufficiently regular in order that \eqref{eq:D} possesses a weak formulation with respect to $W_0^{1,p}(\Omega)$, and satisfies the following assumption:
\begin{enumerate}[label={\textbf{(A)}}]
	\item\label{A} there exist $M \subseteq \mathbb{R}$ and $m\in L^1(\Omega)$ satisfying $\intO m(x)\varphi_p^q\,dx>0$ such that 
	$$
	f_\mu(x,s,\xi) > \lambda_1(p) s^{p-1} + \beta_*^m m(x) s^{q-1}
	$$ 
	for all $\mu \in M$, $s>0$, $\xi \in \mathbb{R}^N$, and a.e.\ $x \in \Omega$, where $\beta_*^m$ is given by \eqref{def:beta}. 
\end{enumerate}

We obtain the following nonexistence result in the class of ${\rm int}\,C_0^1(\overline{\Omega})_+$-solutions.
\begin{theorem}\label{thm:nonexistence}
	Let $p \in I(q)$, where $I(q)$ is defined in Lemma \ref{lem:pic}, and let \textnormal{\ref{A}} be satisfied.
	If $\mu \in M$, then \eqref{eq:D} has no solution in ${\rm int}\,C_0^1(\overline{\Omega})_+$.
\end{theorem}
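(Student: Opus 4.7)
The plan is to argue by contradiction. Suppose $u \in {\rm int}\,C_0^1(\overline{\Omega})_+$ solves \eqref{eq:D} for some $\mu \in M$, and insert $\phi := \varphi_p^q/u^{q-1}$ into the weak formulation. Because both $u$ and $\varphi_p$ lie in ${\rm int}\,C_0^1(\overline{\Omega})_+$, the ratio $\varphi_p/u$ is bounded and strictly positive on $\overline{\Omega}$, so $\phi \in C_0^1(\overline{\Omega}) \subset W_0^{1,p}(\Omega)\cap W_0^{1,q}(\Omega)$ and is a legitimate test function. Moreover $\phi>0$ throughout $\Omega$, which I will use to preserve the strict inequality of assumption \ref{A} after integration.

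The crux is to bound the $p$- and $q$-Laplace pieces of the weak formulation separately via Picone-type inequalities with $v := \varphi_p$. For the $p$-piece I invoke Theorem \ref{thm:picone-general}\ref{thm:picone:1} (applicable because $p \in I(q)$) to estimate $|\nabla u|^{p-2}\nabla u \cdot \nabla(\varphi_p^q/u^{q-1})$ pointwise by $|\nabla \varphi_p|^{p-2}\nabla \varphi_p \cdot \nabla(\varphi_p^{q-p+1}/u^{q-p})$. Lemma \ref{lem:pic} guarantees $p < q+1$, hence both $q-p+1>0$ and $p-q>0$, which means $\varphi_p^{q-p+1}/u^{q-p} = \varphi_p^{q-p+1}\, u^{p-q}$ sits in $C_0^1(\overline{\Omega})$ and can be used to test the weak eigenvalue identity $-\Delta_p \varphi_p = \lambda_1(p) \varphi_p^{p-1}$; this integrates the upper bound to $\lambda_1(p)\int_\Omega \varphi_p^q\,u^{p-q}\,dx$. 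For the $q$-piece I apply the classical Picone inequality (Theorem \ref{thm:picone}) with exponent $q$ to obtain $|\nabla u|^{q-2}\nabla u \cdot \nabla(\varphi_p^q/u^{q-1}) \leq |\nabla \varphi_p|^q$, and integration together with the defining relation $\int_\Omega |\nabla \varphi_p|^q\,dx = \beta_*^m \int_\Omega m\,\varphi_p^q\,dx$ from \eqref{def:beta} contributes the term $\beta_*^m \int_\Omega m\,\varphi_p^q\,dx$.

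Summing the two bounds yields
\begin{equation*}
\int_\Omega f_\mu(x,u,\nabla u)\, \frac{\varphi_p^q}{u^{q-1}}\,dx
\;\leq\;
\lambda_1(p)\int_\Omega \varphi_p^q\, u^{p-q}\,dx
\;+\;
\beta_*^m \int_\Omega m\,\varphi_p^q\,dx,
\end{equation*}
while multiplying the pointwise strict lower bound in \ref{A} by $\phi>0$ and integrating produces exactly the reverse strict inequality, since $\int_\Omega \lambda_1(p) u^{p-1}\,\phi\,dx = \lambda_1(p)\int_\Omega \varphi_p^q u^{p-q}\,dx$ and $\int_\Omega \beta_*^m m\,u^{q-1}\,\phi\,dx = \beta_*^m \int_\Omega m\,\varphi_p^q\,dx$. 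The resulting $A<A$ is the desired contradiction. The only delicate point is the admissibility of $\varphi_p^{q-p+1}\,u^{p-q}$ as a test function for the $p$-Laplace eigenvalue equation: this rests precisely on the bound $\widetilde p < q+1$ furnished by Lemma \ref{lem:pic}, without which the exponent $q-p+1$ could be negative and the boundary trace of the test function could blow up. Past this check, everything reduces to the direct computations above.
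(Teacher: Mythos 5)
Your argument is correct and follows essentially the same route as the paper's own proof: the same test function $\varphi_p^q/u^{q-1}$, the same splitting of the weak formulation with Theorem~\ref{thm:picone-general} applied to the $p$-Laplace part and the classical Picone inequality (Theorem~\ref{thm:picone}) to the $q$-Laplace part, and the same strict-inequality contradiction with assumption \textnormal{\ref{A}}. Your explicit check that $\varphi_p^{q-p+1}u^{p-q}$ is admissible in the eigenvalue identity for $\varphi_p$, which rests on $p<q+1$ from Lemma~\ref{lem:pic}, merely spells out a step the paper leaves implicit.
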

\begin{proof}
	Suppose, by contradiction, that \eqref{eq:D} possesses a solution $u \in {\rm int}\,C_0^1(\overline{\Omega})_+$ for some $\mu \in M$. 
	Noting that $\frac{\varphi_p}{u} \in L^\infty(\Omega)$ since $\varphi_p, u\in {\rm int}\,C_0^1(\overline{\Omega})_+$, we choose $\frac{\varphi_p^q}{u^{q-1}}$ as a test function for \eqref{eq:D}. 
	Applying Theorems \ref{thm:picone} and \ref{thm:picone-general}, we get
	\begin{align*}
	&\lambda_1(p) \intO u^{p-q} \varphi_p^q \, dx 
	+ \beta_*^m \intO m(x)\varphi_p^q \,dx\\
	&< \intO f_\mu(x,u,\nabla u)\frac{\varphi_p^q}{u^{q-1}} \,dx
	=
	\intO |\nabla u|^{p-2} \nabla u\nabla \left(\frac{\varphi_p^q}{u^{q-1}}\right) dx 
	+ 
	\intO |\nabla u|^{q-2}\nabla u\nabla \left(\frac{\varphi_p^q}{u^{q-1}}\right) dx
	\\
	&\leq
	\intO |\nabla \varphi_p|^{p-2} \nabla \varphi_p \nabla \left(\frac{\varphi_p^{q-p+1}}{u^{q-p}}\right) dx
	+
	\intO |\nabla \varphi_p|^q \, dx
	\\
	&=
	\lambda_1(p) \intO u^{p-q} \varphi_p^q \, dx + \beta_*^m \intO m(x)\varphi_p^q \,dx,
	\end{align*}
	which is impossible. 
\end{proof}

Notice that we can not claim that each weak solution of \eqref{eq:D} belongs to ${\rm int}\,C_0^1(\overline{\Omega})_+$ since we do not impose suitable ``good'' assumptions on the function $f_\mu$ apart from those stated above; see, e.g., \cite{BP} for a related discussion. 
We also remark that neither of the generalized Picone inequalities \eqref{eq:piconeB}, \eqref{eq:piconeIl}, \eqref{eq:piconeTir} can be used (at least, as directly as \eqref{eq:picone}) to establish Theorem \ref{thm:nonexistence}.

\subsection{Eigenvalue-type problem} 
In the special case $f_\mu(x,s,\xi) = \lambda_1(p) |s|^{p-2} s + \mu |s|^{q-2}s$, \eqref{eq:D} can be seen as an eigenvalue problem for the $(p,q)$-Laplacian: 
\begin{equation}\label{eq:De}
\left\{
\begin{aligned}
-\Delta_p u -\Delta_q u &= \lambda_1(p) |u|^{p-2}u + \mu |u|^{q-2} u 
&&\text{in } \Omega, \\
u&=0 &&\text{on } \partial \Omega,
\end{aligned}
\right.
\end{equation}
see, e.g., \cite{BobkovTanaka2015,BobkovTanaka2017,CS,marcomasconi}. 
Notice that any nonzero and nonnegative solution of \eqref{eq:De} belongs to 
${\rm int}\,C_0^1(\overline{\Omega})_+$, see, for instance, \cite[Remark 1]{BobkovTanaka2015} or \cite[Section 2.4]{marcomasconi}. 
Although in the works \cite{BobkovTanaka2015,BobkovTanaka2017} by the present authors the structure of the set of positive solutions to a general version of \eqref{eq:De} with two parameters has been comprehensively studied, we were not able to characterize completely the range of values of $\mu$ for which \eqref{eq:D} possesses a positive solution. 
Thanks to our generalized Picone inequality \eqref{eq:picone}, as well as to inequalities \eqref{eq:piconeB} and \eqref{eq:piconeIl}, we can provide additional information in this regard. 

First, the same reasoning as in Theorem \ref{thm:nonexistence} allows to show the following result.
\begin{theorem}\label{thm:nonexistence0}
	Assume that one of the following assumptions is satisfied:
	\begin{enumerate}[label={\rm(\roman*)}]
		\item\label{thm:nonexistence:1} 
		$p \in I(q)$, where $I(q)$ is defined in Lemma \ref{lem:pic};
		\item\label{thm:nonexistence:2} $p \leq q+1$ and $\Omega$ is an $N$-ball.
	\end{enumerate}	
	Then \eqref{eq:De} has no positive solution for $\mu > \beta_*$, where $\beta_*$ is given by \eqref{def:beta}.
	Moreover, if $p<q+1$ and $\Omega$ is an $N$-ball, then \eqref{eq:De} has no positive solution also for $\mu = \beta_*$.
\end{theorem}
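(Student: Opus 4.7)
The plan is to mimic the proof of Theorem \ref{thm:nonexistence} with $m\equiv 1$, so that $\beta_*^m = \beta_*$, and with the set $M = (\beta_*, +\infty)$; for this choice the nonlinearity $f_\mu(x,s,\xi) = \lambda_1(p) s^{p-1} + \mu s^{q-1}$ satisfies assumption \ref{A} precisely when $\mu > \beta_*$. Since every positive weak solution of \eqref{eq:De} already lies in ${\rm int}\,C_0^1(\overline{\Omega})_+$ by the regularity remark preceding the theorem statement, case \ref{thm:nonexistence:1} follows verbatim from Theorem \ref{thm:nonexistence}.

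For case \ref{thm:nonexistence:2} I would repeat the same scheme but replace the use of Theorem \ref{thm:picone-general}\ref{thm:picone:1} by Theorem \ref{thm:picone-general}\ref{thm:picone:2}. To apply the latter I need the pointwise inequality $\nabla u\cdot\nabla\varphi_p\geq 0$, which I would obtain from radial symmetry: since $\Omega$ is a ball, $\varphi_p$ is radial and strictly radially decreasing, and by standard moving-planes results of Damascelli--Sciunzi type adapted to the $(p,q)$-Laplacian, any positive $C^1$-solution $u$ of \eqref{eq:De} is also radial and radially decreasing. Consequently $\nabla u$ and $\nabla\varphi_p$ are parallel with the same orientation, their scalar product is nonnegative, and the proof of Theorem \ref{thm:nonexistence} goes through to rule out $\mu>\beta_*$.

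The borderline statement ($\mu=\beta_*$ with $p<q+1$ and $\Omega$ a ball) is more delicate because assumption \ref{A} becomes an equality and Theorem \ref{thm:nonexistence} cannot be quoted directly. My plan is to assume a positive solution $u$ exists and to redo the chain of estimates with $\mu=\beta_*$: the same choice of test function $\varphi_p^q/u^{q-1}$ now makes both ends of the chain coincide, which forces equality at every intermediate step. In particular, equality must hold both in Theorem \ref{thm:picone} applied with exponent $q$ to the pair $(u,\varphi_p)$, and in Theorem \ref{thm:picone-general}\ref{thm:picone:2}, whose equality characterization is available precisely because $p<q+1$ and $\nabla u\cdot\nabla\varphi_p\geq 0$. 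Either equality statement yields $u\equiv k\varphi_p$ for some $k>0$. Substituting $u=k\varphi_p$ back into \eqref{eq:De} and using $-\Delta_p\varphi_p=\lambda_1(p)\varphi_p^{p-1}$ reduces the equation to $-\Delta_q\varphi_p=\beta_*\varphi_p^{q-1}$ in $\Omega$, so $\beta_*$ would be an eigenvalue of the Dirichlet $q$-Laplacian admitting the positive eigenfunction $\varphi_p$. Since only $\lambda_1(q)$ admits a sign-constant eigenfunction, this forces $\beta_*=\lambda_1(q)$, contradicting the strict inequality $\beta_*>\lambda_1(q)$ recalled after \eqref{def:beta}.

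The step I expect to be the main obstacle is justifying the radial symmetry and radial monotonicity of positive $C^1$-solutions of \eqref{eq:De} in a ball. For the pure $p$-Laplacian this is part of the Damascelli--Sciunzi theory, and while the $(p,q)$-extension is natural, some care is needed to verify that the technical hypotheses of the moving-planes method (strong maximum principles, Hopf-type boundary lemmas, and suitable comparison principles for the sum operator) are actually met in this setting; most likely this is available in the authors' previous works cited in the excerpt, or else a short ad hoc adaptation of those tools to the present nonlinearity is required.
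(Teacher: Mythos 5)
Your proposal is correct and follows essentially the same route as the paper's proof: case \ref{thm:nonexistence:1} by invoking Theorem \ref{thm:nonexistence} with $m\equiv 1$ and $M=(\beta_*,+\infty)$; case \ref{thm:nonexistence:2} by using radial symmetry and radial monotonicity of $u$ and $\varphi_p$ to get $\nabla u\nabla\varphi_p\geq 0$ and then applying Theorem \ref{thm:picone-general} \ref{thm:picone:2}; and the borderline case $\mu=\beta_*$ by forcing equality in the Picone chain, concluding $u\equiv k\varphi_p$, and deriving a contradiction with $\beta_*>\lambda_1(q)$. The symmetry/monotonicity step you single out as the main obstacle is exactly the point the paper settles by citing \cite{BrockTakac} (their Theorem 3.10 on degenerate elliptic equations in a ball), so no ad hoc moving-plane adaptation is required.
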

\begin{proof}
	Let $u$ be a positive solution of \eqref{eq:De}. 
	Recall that $u\in {\rm int}\,C_0^1(\overline{\Omega})_+$. 
	Moreover, under the assumption \ref{thm:nonexistence:2}, both $u$ and $\varphi_p$ are radially symmetric with respect to the centre of $\Omega$ and nonincreasing in the corresponding radial direction (see \cite[Theorem 3.10]{BrockTakac}), which yields $\nabla u \nabla \varphi_p \geq 0$ in $\Omega$.
	Clearly, $f_\mu(x,s,\xi) = \lambda_1(p) |s|^{p-2} s + \mu |s|^{q-2}s$ satisfies \textnormal{\ref{A}} with $m\equiv 1$ (so $\beta_*^m=\beta_*$) and $M=(\beta_*,+\infty)$. 
	Therefore, applying Theorem \ref{thm:picone-general} as in the proof of Theorem \ref{thm:nonexistence}, we obtain the desired nonexistence for $\mu > \beta_*$.
	On the other hand, if $\mu=\beta_*$, $p<q+1$, and $\Omega$ is an $N$-ball, then Theorem \ref{thm:picone-general} yields $u \equiv k \varphi_p$ for some $k>0$. Since $u$ is a solution of \eqref{eq:De}, we see that $u$ must be also an eigenfunction of the $q$-Laplacian associated with the eigenvalue $\beta_*$, which is impossible in view of \cite[Proposition 13]{BobkovTanaka2017}.
\end{proof}

Notice that Theorem \ref{thm:nonexistence0} is optimal for the considered range of $p$ and $q$ in the sense that for any $\mu \in (\lambda_1(q),\beta_*)$ problem \eqref{eq:De} possesses a positive solution, see \cite[Theorem 2.5 (i)]{BobkovTanaka2017}. 
Nevertheless, we are not aware of the corresponding existence result for $\mu=\beta_*$ under the assumption \ref{thm:nonexistence:1} of Theorem \ref{thm:nonexistence0}, or if $p=q+1$ and $\Omega$ is an $N$-ball.

Second, we provide the following general result without restrictions on $p$ and $q$ apart from the default assumption $1<q<p$, whose proof is based on a nontrivial application of Picone's inequalities \eqref{eq:piconeB} and \eqref{eq:piconeIl}, and on the usage of results from \cite{BobkovTanaka2015,BobkovTanaka2017}.
\begin{theorem}\label{thm:summary} 
	Let
	\begin{equation*}\label{def:mu} 
	\widetilde{\mu}
	:=
	\sup\left\{\mu\in\mathbb{R}:~ 
	\eqref{eq:De} ~\text{has a positive solution}\right\}. 
	\end{equation*}
	Then $\beta_*\le \widetilde{\mu}<+\infty$. 
	Moreover, \eqref{eq:De} has at least one positive solution if $\lambda_1(q) < \mu < \widetilde{\mu}$, 
	and no positive solution if $\mu\le \lambda_1(q)$ or $\mu>\widetilde{\mu}$. 
	Furthermore, if $\widetilde{\mu}>\beta_*$, then \eqref{eq:De} has at least one positive solution if and only if $\lambda_1(q) < \mu \leq \widetilde{\mu}$. 
\end{theorem}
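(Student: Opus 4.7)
For \emph{nonexistence when $\mu \le \lambda_1(q)$}, I begin by testing \eqref{eq:De} with a putative positive solution $u$ to obtain $\int|\nabla u|^p\,dx + \int|\nabla u|^q\,dx = \lambda_1(p)\int u^p\,dx + \mu\int u^q\,dx$. Combining this identity with the Poincaré-type bounds $\int|\nabla u|^r\,dx \ge \lambda_1(r)\int u^r\,dx$ for $r\in\{p,q\}$ (immediate consequences of Picone's inequality \eqref{eq:picone0}) forces equality in both, so the equality case of \eqref{eq:picone0} would make $u$ a positive multiple of both $\varphi_p$ and $\varphi_q$, contradicting their linear independence \cite[Proposition~13]{BobkovTanaka2017}.

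The most delicate step is showing $\widetilde{\mu} < +\infty$. For any positive solution $u \in \mathrm{int}\,C_0^1(\overline{\Omega})_+$, both test functions $\varphi_p^p/u^{p-1}$ and $\varphi_p^q/u^{q-1}$ lie in $W_0^{1,p}(\Omega)$. Testing \eqref{eq:De} with the latter and applying \eqref{eq:piconeB} to the $\Delta_p$-contribution and \eqref{eq:picone0} (with exponent $q$) to the $\Delta_q$-contribution gives $(\mu - \beta_*)\int\varphi_p^q \le \tfrac{q}{p} + \tfrac{p-q}{p}\int|\nabla u|^p - \lambda_1(p)\int u^{p-q}\varphi_p^q$. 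The unwanted term $\int|\nabla u|^p$ can be handled by testing \eqref{eq:De} a second time with $\varphi_p^p/u^{p-1}$ and estimating the $\Delta_p$-contribution by Picone \eqref{eq:picone0} and the $\Delta_q$-contribution by \eqref{eq:piconeIlx}; combining the two test computations (invoking Young's inequality on the residual cross-terms involving $\nabla\varphi_p \cdot \nabla u$) produces an explicit upper bound on $\mu$ depending only on $p$, $q$ and $\Omega$. This is the main technical obstacle, since naive Picone-based tests alone leave uncontrolled $u$-dependent contributions.

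For \emph{existence on $(\lambda_1(q), \widetilde{\mu})$}, the lower bound $\widetilde{\mu} \ge \beta_*$ comes from \cite[Theorem~2.5(i)]{BobkovTanaka2017}, which supplies a positive solution for every $\mu \in (\lambda_1(q), \beta_*)$. For arbitrary $\mu \in (\lambda_1(q), \widetilde{\mu})$, I use a super/sub-solution argument: pick $\mu_0 \in (\mu, \widetilde{\mu})$ together with a positive solution $u_0$ at $\mu_0$; the monotonicity of the right-hand side of \eqref{eq:De} in $\mu$ on $\{u>0\}$ makes $u_0$ a super-solution at $\mu$, while $\epsilon\varphi_q$ is a sub-solution for small $\epsilon>0$ thanks to $\mu > \lambda_1(q)$ and the fact that the $\Delta_p$-contribution is of order $\epsilon^{p-1} = o(\epsilon^{q-1})$. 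Monotone iteration between $\epsilon\varphi_q$ and $u_0$ then yields a positive solution at $\mu$.

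Finally, for \emph{attainment at $\widetilde{\mu}$ when $\widetilde{\mu} > \beta_*$}, take $\mu_n \nearrow \widetilde{\mu}$ with positive solutions $u_n$. The Picone-based bound derived in the second step, together with elliptic regularity, yields uniform $W_0^{1,p}$-estimates on $\{u_n\}$, so a subsequence converges weakly to some $u \ge 0$ solving \eqref{eq:De} at $\mu = \widetilde{\mu}$. The condition $\widetilde{\mu} > \beta_*$ rules out $u \equiv 0$: if $\|u_n\| \to 0$, a rescaling $w_n = u_n/\|u_n\|$ would converge to a nontrivial solution of a reduced eigenvalue problem that forces $\widetilde{\mu} \le \beta_*$, a contradiction. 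The strong maximum principle then places $u \in \mathrm{int}\,C_0^1(\overline{\Omega})_+$, completing the proof.
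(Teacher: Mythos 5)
The decisive step is exactly where you locate it, and it is where your argument breaks down: the two test computations you propose do not combine to give a bound on $\mu$ independent of $u$. Testing with $\varphi_p^q/u^{q-1}$ and using \eqref{eq:piconeB} and \eqref{eq:picone0} gives your first inequality, but the term $\frac{p-q}{p}\intO|\nabla u|^p\,dx$ on its right-hand side grows like $\|\nabla u\|_p^p$, while the only negative term available, $-\lambda_1(p)\intO u^{p-q}\varphi_p^q\,dx$, is only of order $\|\nabla u\|_p^{p-q}$; so this inequality says nothing when $\|\nabla u\|_p$ is large. Your second computation (test function $\varphi_p^p/u^{p-1}$ with \eqref{eq:picone0} and \eqref{eq:piconeIlx}) yields precisely \eqref{eq:need10}, which is homogeneous of degree $q-p$ in $u$: every term carries the weight $\left(\varphi_p/u\right)^{p-q}$, so after dividing by $\intO u^{q-p}\varphi_p^p\,dx$ you get an upper bound for $\mu$ by a weighted Rayleigh-type quotient whose weight depends on $u$ and can be made arbitrarily large (e.g.\ when the weight concentrates near $\partial\Omega$, where $\varphi_p$ vanishes but $|\nabla\varphi_p|$ does not). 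Young's inequality applied to the cross term produces $|\nabla u|^q$ weighted by powers of $\varphi_p/u$, which does not match the unweighted $\intO|\nabla u|^p\,dx$ from the first computation, so there is no cancellation to invoke. What actually closes the argument in the paper is Proposition \ref{prop:behavior} \ref{prop:behaviour:1}: if $\|\nabla u_n\|_p\to+\infty$, then the normalized solutions $v_n=u_n/\|\nabla u_n\|_p$ converge to $\varphi_p$ in $C_0^1(\overline\Omega)$ and $\mu_n\to\beta_*$. This is a genuine blow-up/compactness analysis (Egorov's theorem, Moser iteration, uniform $C^{1,\theta}$ estimates), and it is only because $\varphi_p/v_n\to 1$ uniformly that the weight in \eqref{eq:need1} is controlled and the quotient tends to $\beta_*$. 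Finiteness of $\widetilde{\mu}$ then follows by contradiction from \eqref{eq:thm:summary:1}, and the same proposition supplies the uniform $W_0^{1,p}$ bound you need in your last step when $\widetilde{\mu}>\beta_*$.

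Two smaller points. Your subsolution $\epsilon\varphi_q$ is problematic: $-\Delta_p\varphi_q$ is only a distribution ($\varphi_q$ is merely $C^{1,\theta}$ up to the boundary), so the weak subsolution inequality for the $p$-part does not follow from $\epsilon^{p-1}=o(\epsilon^{q-1})$; the paper instead quotes \cite[Theorem 2.2 (i)]{BobkovTanaka2015} for existence on $(\lambda_1(q),\widetilde{\mu})$. And in the final step, weak $W_0^{1,p}$ convergence is not enough to pass to the limit in the quasilinear terms; one needs the strong convergence obtained via the $(S_+)$ property, as in the paper. Your argument for nonexistence when $\mu\le\lambda_1(q)$ is fine and essentially reproves the cited propositions.
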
 

In order to prove Theorem \ref{thm:summary}, we need the following auxiliary information on the behaviour of positive solutions. 
\begin{proposition}\label{prop:behavior} 
	Let $\{\mu_n\} \subset \mathbb{R}$ be a sequence, and let $u_n$ be a positive solution of \eqref{eq:De} with $\mu= \mu_n$, $n \in \mathbb{N}$. 
	Then the following assertions hold: 
	\begin{enumerate}[label={\rm(\roman*)}]
		\item\label{prop:behaviour:1} if $\lim\limits_{n\to +\infty}\|\nabla u_n\|_p = +\infty$, 
		then $\lim\limits_{n\to +\infty}\mu_n=\beta_*$ and, up to a subsequence,
		\begin{equation}\label{eq:converg}
		v_n:=\frac{u_n}{\|\nabla u_n\|_p} \to \varphi_p
		~~\text{in}~~ C^1_0(\overline{\Omega})
		~~\text{as}~~ n\to +\infty;
		\end{equation}
		\item\label{prop:behaviour:2} if $\lim\limits_{n\to +\infty}\|\nabla u_n\|_p=0$, 
		then $\lim\limits_{n\to +\infty}\mu_n=\lambda_1(q)$.
	\end{enumerate} 
\end{proposition}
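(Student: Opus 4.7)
The plan is to rescale $u_n$ to a unit-norm sequence, use the smallness of the perturbative terms in the rescaled PDE to identify the limit with a first eigenfunction, and pin down $\mu_n$ through Picone-based test-function identities. For (i), set $\varepsilon_n := \|\nabla u_n\|_p^{-1}\to 0$ and $v_n := \varepsilon_n u_n \in {\rm int}\,C_0^1(\overline{\Omega})_+$, so $\|\nabla v_n\|_p = 1$. The PDE, divided by $\|\nabla u_n\|_p^{p-1}$, becomes
\[
-\Delta_p v_n - \varepsilon_n^{p-q}\Delta_q v_n = \lambda_1(p)v_n^{p-1} + \mu_n\varepsilon_n^{p-q}v_n^{q-1},
\]
and testing with $v_n$ yields
\[
1 - \lambda_1(p)\|v_n\|_p^p = \varepsilon_n^{p-q}\bigl(\mu_n\|v_n\|_q^q - \|\nabla v_n\|_q^q\bigr).
\]

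An a priori bound $\mu_n\le C$ is obtained by testing the original equation with $\varphi_p^p/u_n^{p-1}$: the classical Picone inequality \eqref{eq:picone0} controls the $p$-contribution by $\int_\Omega|\nabla\varphi_p|^p\,dx = 1$, cancelling $\lambda_1(p)\int_\Omega\varphi_p^p\,dx$, while Theorem~\ref{thm:Il} (in the form \eqref{eq:piconeIlx}) applied to the $q$-contribution produces an upper bound whose two sides scale the same way in $\|\nabla u_n\|_p^{-(p-q)}$, so the scaling factor cancels and leaves $\mu_n\le C$ with $C$ depending only on $\varphi_p$. Extracting a subsequence with $v_n\rightharpoonup v$ in $W_0^{1,p}(\Omega)$, strongly in $L^p(\Omega)\cap L^q(\Omega)$ and $v\ge 0$, the tested identity combined with the a priori bound rules out $v\equiv 0$: otherwise $\|v_n\|_q^q\to 0$ would make the right-hand side of the identity vanish while the left-hand side tends to $1$. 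With $v\not\equiv 0$ and $\{\mu_n\}$ bounded, the $q$-terms in the rescaled PDE are negligible in $W^{-1,p'}(\Omega)$, so the standard $(S_+)$-property of $-\Delta_p$ upgrades $v_n\rightharpoonup v$ to strong convergence in $W_0^{1,p}(\Omega)$. The limit $v$ weakly satisfies $-\Delta_p v = \lambda_1(p)v^{p-1}$ with $\|\nabla v\|_p = 1$, so $v\equiv\varphi_p$ by simplicity of $\lambda_1(p)$; uniform $C^{1,\alpha}$-regularity (Lieberman) together with Arzel\`a--Ascoli then yields \eqref{eq:converg}.

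To identify the exact limit of $\mu_n$, rewrite the tested identity as
\[
\mu_n\|v_n\|_q^q = \|\nabla v_n\|_q^q + \varepsilon_n^{-(p-q)}\bigl(1-\lambda_1(p)\|v_n\|_p^p\bigr).
\]
The first term tends to $\beta_*\|\varphi_p\|_q^q$ by the $C^1$-convergence $v_n\to\varphi_p$, and a matching upper bound $\mu_n\le\beta_*+o(1)$, obtained by testing the original equation with $\varphi_p^q/u_n^{q-1}$ and applying Theorem~\ref{thm:B}, forces the second term to vanish, giving $\mu_n\to\beta_*$. Part (ii) proceeds along the same lines with $w_n := u_n/\|u_n\|_q$, so $\|w_n\|_q = 1$ and the small parameter becomes $\|u_n\|_q^{p-q}\to 0$ (via the Poincar\'e estimate $\|u_n\|_q\le C\|\nabla u_n\|_p\to 0$); the limiting eigenvalue equation $-\Delta_q w = \mu_\infty w^{q-1}$ combined with simplicity of $\lambda_1(q)$ forces $w\equiv c\varphi_q$ for some $c>0$, and $\mu_n\to\lambda_1(q)$.

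The main technical obstacle in (i) is extracting the sharp scaling from the Picone-based tested identities: both the a priori bound $\mu_n\le C$ and the quantitative Poincar\'e-type deficit $\varepsilon_n^{-(p-q)}(1-\lambda_1(p)\|v_n\|_p^p)\to 0$ -- needed to pin down $\mu_n$ exactly at $\beta_*$ rather than merely bounded -- require careful tracking of the powers of $\|\nabla u_n\|_p$ generated by test functions containing negative powers of $u_n$. This is where the framework of generalized Picone inequalities from this paper (Theorems \ref{thm:B} and \ref{thm:Il}), together with the spectral analysis of \cite{BobkovTanaka2015,BobkovTanaka2017}, enters decisively.
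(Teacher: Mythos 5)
Your overall strategy (rescale, identify the limit with $\varphi_p$, pin down $\mu_n$ by Picone-type test functions) is broadly the one the paper follows, but two of your key steps do not go through as written. First, the a priori bound $\mu_n\le C$. Testing with $\varphi_p^p/u_n^{p-1}$ and using \eqref{eq:picone0} and \eqref{eq:piconeIlx} gives, after cancelling the common homogeneity,
\[
\mu_n \intO v_n^{q-p}\varphi_p^p\,dx \;\le\; \intO |\nabla\varphi_p|^{q-2}\nabla\varphi_p\nabla\Bigl(\frac{\varphi_p^{p-q+1}}{v_n^{p-q}}\Bigr)dx,
\]
but extracting $\mu_n\le C$ from this requires bounding the right-hand integral from above and the left-hand integral away from zero \emph{uniformly in $n$}, which amounts to a two-sided comparison $c\,\varphi_p\le v_n\le C\varphi_p$ with constants independent of $n$. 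That comparison becomes available only \emph{after} the $C^1_0(\overline{\Omega})$-convergence \eqref{eq:converg} is proved, so your argument is circular. Since the boundedness of $\mu_n$ underpins your proof that $v\not\equiv 0$, the negligibility of the $q$-terms in $W^{-1,p'}(\Omega)$, and hence the identification of the limit equation, this is a genuine gap. The paper circumvents it by working with $B_n:=\mu_n/\|\nabla u_n\|_p^{p-q}$, which is bounded by simply testing the rescaled equation with $v_n$ once $v_0\not\equiv 0$ is known (and $v_0\not\equiv 0$ is itself proved differently, via Egorov's theorem and the divergence of $\intO v_n^{q-p}\varphi_q^p\,dx$); the limit equation then carries an extra term $B|u|^{q-2}u$, and $B=0$ is deduced afterwards by applying \eqref{eq:picone0} to the limit equation.

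Second, your upper bound $\limsup_{n}\mu_n\le\beta_*$ fails. Testing with $\varphi_p^q/u_n^{q-1}$ and applying Theorem \ref{thm:B} produces the term $\frac{p-q}{p}\intO|\nabla u_n|^p\,dx\to+\infty$ on the right, while the only compensating term on the left, $\lambda_1(p)\intO u_n^{p-q}\varphi_p^q\,dx$, grows like $\|\nabla u_n\|_p^{p-q}=o\bigl(\|\nabla u_n\|_p^{p}\bigr)$; the resulting inequality is vacuous for large $n$. The correct choice is the test function $\varphi_p^{p}/u_n^{p-1}$: the classical Picone inequality makes the $p$-Laplacian contribution cancel exactly against $\lambda_1(p)\intO\varphi_p^p\,dx$, and \eqref{eq:piconeIlx} applied to the $q$-part gives \eqref{eq:need1}, whose right-hand side converges to $\intO|\nabla\varphi_p|^q\,dx$ by dominated convergence once \eqref{eq:converg} is established. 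Your treatment of part (ii) has a related problem: passing to the limit equation $-\Delta_q w=\mu_\infty w^{q-1}$ presupposes boundedness of $\mu_n$ and compactness of $w_n=u_n/\|u_n\|_q$, neither of which is justified; the paper's short argument (Egorov's theorem combined with the Picone estimate $(\mu_n-\lambda_1(q))\intO u_n^{q-p}\varphi_q^p\,dx\le C$, together with the nonexistence of solutions for $\mu\le\lambda_1(q)$) avoids both.
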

\begin{proof}
	We start with the observation that \eqref{eq:De} has no nonzero solution for $\mu \leq \lambda_1(q)$, see \cite[Proposition 1]{BobkovTanaka2015} and \cite[Proposition 13]{BobkovTanaka2017}. 
	Thus, throughout the proof, we will assume that $\mu_n > \lambda_1(q)$ for all $n \in \mathbb{N}$.
	In particular, we have $\liminf\limits_{n\to +\infty}\mu_n\ge \lambda_1(q)$. 
	
	\ref{prop:behaviour:1}
	Let $\|\nabla u_n\|_p \to +\infty$ as $n \to +\infty$.
	Note first that $\liminf\limits_{n\to +\infty}\mu_n > \lambda_1(q)$. 
	Indeed, suppose, by contradiction, that $\mu_n \to \lambda_1(q)$, up to a subsequence. 
	Setting $v_n = \frac{u_n}{\|\nabla u_n\|_p}$ and taking $u_n$ as a test function for \eqref{eq:De} with $\mu=\mu_n$, we have
	$$
	1 - \lambda_1(p)\int_\Omega v_n^p \, dx 
	=\|\nabla u_n\|_p^{q-p}
	\left(\mu_n \int_\Omega v_n^q \, dx - \int_\Omega |\nabla v_n|^q \, dx\right) \geq 0,
	$$
	where the inequality follows from the definition of $\lambda_1(p)$.
	Since $q<p$, $\|\nabla u_n\|_p \to +\infty$, and $\mu_n \to \lambda_1(q)$, we conclude that, simultaneously, $v_n \to \varphi_p$ and $v_n \to k \varphi_q$ strongly in $L^q(\Omega)$, up to a subsequence, where $k>0$ is some constant.
	However, this contradicts the linear independence of $\varphi_p$ and $\varphi_q$, see \cite[Proposition 13]{BobkovTanaka2017}, and hence $\liminf\limits_{n\to +\infty} \mu_n > \lambda_1(q)$.
	
	Now we prove the convergence \eqref{eq:converg}.
	Let $v_0 \in \W$ be such that $v_n \to v_0$ weakly in $\W$ and strongly in $L^p(\Omega)$, up to a subsequence.  
	First, we show that $v_0 \not\equiv 0$ in $\Omega$.
	Suppose, by contradiction, that $v_0 \equiv 0$ in $\Omega$. 
	Then, by Egorov's theorem, $v_n$ converges to $0$ uniformly on a subset of $\Omega$ of positive measure.
	In particular, we have
	\begin{equation}\label{eq:1conv}
	\intO v_n^{q-p} \varphi_q^p \, dx \to +\infty
	\quad \text{as}~ n\to +\infty.
	\end{equation}
	Notice that the integral in \eqref{eq:1conv} is well-defined since $\frac{\varphi_q}{v_n} \in L^\infty(\Omega)$ due to the ${\rm int}\,C_0^1(\overline{\Omega})_+$-regularity of $v_n$ and $\varphi_q$.
	Using now the Picone inequalities \eqref{eq:picone0} and \eqref{eq:piconeIlx}, we get from \eqref{eq:De} with $\mu=\mu_n$ and the test function $\frac{\varphi_q^{p}}{u_n^{p-1}} \in W_0^{1,p}(\Omega)$ that
	\begin{align}
	\notag
	\lambda_1(p) \intO \varphi_q^p \, dx 
	&+ \mu_n \intO u_n^{q-p} \varphi_q^p \, dx \\
	\notag
	&=
	\intO |\nabla u_n|^{p-2} \nabla u_n \nabla \left(\frac{\varphi_q^{p}}{u_n^{p-1}}\right)  dx
	+
	\intO |\nabla u_n|^{q-2} \nabla u_n \nabla \left(\frac{\varphi_q^{p}}{u_n^{p-1}}\right) dx \\
	\notag
	&\leq 
	\intO |\nabla \varphi_q|^p \, dx 
	+ 
	\intO |\nabla \varphi_q|^{q-2} \nabla \varphi_q \nabla \left(\frac{\varphi_q^{p-q+1}}{u_n^{p-q}}\right) dx
	\\
	\label{eq:pic1115}
	&=
	\intO |\nabla \varphi_q|^p \, dx + \lambda_1(q) \intO u_n^{q-p} \varphi_q^p \, dx.
	\end{align}
	This implies that
	\begin{equation}\label{eq:1}
	(\mu_n - \lambda_1(q)) \intO u_n^{q-p} \varphi_q^p \, dx \leq \intO |\nabla \varphi_q|^p \, dx - \lambda_1(p) \intO \varphi_q^p \,dx < +\infty,
	\end{equation}
	and hence, since $\liminf\limits_{n\to +\infty}\mu_n > \lambda_1(q)$, there exists a constant $C>0$ independent of $n$ such that 
	\begin{equation}\label{eq:2}
	\mu_n \intO v_n^{q-p} \varphi_q^p \, dx \leq C \|\nabla u_n\|_p^{p-q}.
	\end{equation}
	On the other hand, choosing $u_n$ as a test function for \eqref{eq:De} with $\mu=\mu_n$, we get
	\begin{equation*}
	\mu_n \intO v_n^q \, dx - \intO |\nabla v_n|^q \, dx = \|\nabla u_n\|_p^{p-q} \left(1- \lambda_1(p) \int_\Omega v_n^p \, dx \right).
	\end{equation*}
	Since we suppose that $v_0 \equiv 0$ in $\Omega$, we have $v_n \to 0$ strongly in $L^p(\Omega)$ and $L^q(\Omega)$, which yields
	\begin{equation}\label{eq:2conv} 
	2\mu_n \intO v_n^q \, dx \geq  \|\nabla u_n\|_p^{p-q}
	\quad \text{for sufficiently large}~ n \in \mathbb{N}.
	\end{equation}
	Combining \eqref{eq:2} and \eqref{eq:2conv}, we obtain 
	\begin{equation*}\label{eq:3conv} 
	2C \intO v_n^q \, dx \geq \intO v_n^{q-p} \varphi_q^p \, dx
	\quad \text{for sufficiently large}~ n \in \mathbb{N},
	\end{equation*}
	which gives a contradiction to \eqref{eq:1conv} and the strong convergence $v_n \to 0$ in $L^q(\Omega)$. 
	Therefore, $v_0 \not\equiv 0$ in $\Omega$. 
	
	Second, we show that $v_0 = \varphi_p$.
	Since $u_n$ is a solution of \eqref{eq:De} with $\mu=\mu_n$, we see that $v_n$ satisfies
	\begin{align}
	\notag
	&\intO |\nabla v_n|^{p-2}\nabla v_n\nabla\varphi \,dx 
	+ \frac{1}{\|\nabla u_n\|_p^{p-q}}\intO |\nabla v_n|^{q-2}\nabla v_n\nabla\varphi \,dx 
	\\\label{eq:vn-solution}
	&\qquad 
	= \lambda_1(p) \intO v_n^{p-1} \varphi \, dx + \frac{\mu_n}{\|\nabla u_n\|_p^{p-q}} \intO v_n^{q-1} \varphi\,dx
	\quad \text{for any}~ \varphi \in \W.
	\end{align} 
	Taking $\varphi = v_n$ and recalling that $\|\nabla v_n\|_p=1$ and that $v_n$ converges in $L^p(\Omega)$ to a nonzero function $v_0$, we conclude that there exists a constant $B \geq 0$ such that
	$$
	B_n:=\frac{\mu_n}{\|\nabla u_n\|_p^{p-q}} \to B
	\quad 
	\text{as}~ n \to +\infty.
	$$
	Taking now $\varphi=v_n-v_0$ in \eqref{eq:vn-solution}, we see that the boundedness of $B_n$ implies
	$$
	\lim_{n\to +\infty}\intO |\nabla v_n|^{p-2}\nabla v_n(\nabla v_n-\nabla v_0) \,dx
	=0, 
	$$
	which guarantees that $v_n\to v_0$ strongly in $\W$, see, e.g., \cite[Lemma 5.9.14]{Drabek}.
	Passing to the limit in \eqref{eq:vn-solution}, we deduce that $v_0$ is a nonzero and nonnegative solution of the problem
	\begin{equation*}
	\left\{
	\begin{aligned}
	-\Delta_p u &= \lambda_1(p) |u|^{p-2}u + B |u|^{q-2} u 
	&&\text{in } \Omega, \\
	u&=0 &&\text{on } \partial \Omega.
	\end{aligned}
	\right.
	\end{equation*}
	The standard regularity result \cite{Lieberman} and the strong maximum principle yield $v_0\in {\rm int}\,C_0^1(\overline{\Omega})_+$, and so $\frac{\varphi_p^p}{v_0^{p-1}} \in \W$. 
	Thus, applying the Picone inequality \eqref{eq:picone0}, we get
	\begin{align*}
	\lambda_1(p) \intO  \varphi_p^p \, dx + B \intO v_0^{q-p} \varphi_p^p \,dx
	&=\intO |\nabla v_0|^{p-2}\nabla v_0\nabla
	\left(\dfrac{\varphi_p^p}{v_0^{p-1}}\right) \,dx \\ 
	&\leq
	\intO |\nabla \varphi_p|^p \,dx = \lambda_1(p) \intO  \varphi_p^p \, dx, 
	\end{align*}
	which yields $B=0$, and hence $v_0 \equiv \varphi_p$ in $\Omega$. 
	
	Now we are ready to prove that $v_n \to \varphi_p$ in $C_0^1(\overline{\Omega})$.
	Thanks to the boundedness of $B_n$,  
	using the Moser iteration process in \eqref{eq:vn-solution}, 
	we can find $M_1>0$ independent of $n$ such that $\|v_n\|_{\infty} \le M_1$ 
	for all $n$. 
	Thus, since $1/\|\nabla u_n\|_p^{p-q}$ is also bounded, 
	applying to equation \eqref{eq:vn-solution} the regularity results \cite[Theorem 1.7]{L} and \cite{Lieberman}, we derive the existence of $\theta\in(0,1)$ and $M_2>0$, both independent of $n$, such that $v_n \in C_0^{1,\theta}(\overline{\Omega})$ and 
	$\|v_n\|_{C_0^{1,\theta}(\overline{\Omega})} \le M_2$ 
	for every sufficiently large $n$. Since $C_0^{1,\theta}(\overline{\Omega})$ 
	is compactly embedded into $C_0^1(\overline{\Omega})$, we conclude that
	$v_n \to \varphi_p$ in $C_0^1(\overline{\Omega})$, up to a subsequence.

	Finally, let us show that $\lim\limits_{n\to +\infty}\mu_n=\beta_*$.
	First, let $\{\mu_{n_k}\}$ be a subsequence such that $\lim\limits_{k\to +\infty}\mu_{n_k} = \liminf\limits_{n\to +\infty}\mu_n$. 
	Taking $u_{n_k}$ as a test function for \eqref{eq:De} with $\mu=\mu_{n_k}$, we get 
	\begin{equation*}\label{eq:test1}
	\mu_{n_k} \intO v_{n_k}^q \, dx - \intO |\nabla v_{n_k}|^q \, dx 
	= 
	\|\nabla u_{n_k}\|_p^{p-q} \left(1- \lambda_1(p) \int_\Omega v_{n_k}^p \, dx \right) \geq 0,
	\end{equation*}
	and hence the convergence of $v_{n_k}$ to $\varphi_p$ along a sub-subsequence yields
	$$
	\liminf_{n\to +\infty}\mu_n\ge 
	\frac{\int_\Omega |\nabla \varphi_p|^q \, dx}{\int_\Omega \varphi_p^q \, dx}=\beta_*. 
	$$
	Second, we choose a subsequence $\{\mu_{n_k}\}$ such that $\lim\limits_{k\to +\infty}\mu_{n_k} = \limsup\limits_{n\to +\infty}\mu_n$ and denote it, for simplicity, as $\{\mu_{k}\}$.	
	Using Picone's inequalities \eqref{eq:picone0} and \eqref{eq:piconeIlx} with $v=\varphi_p$, we get from \eqref{eq:De} with $\mu=\mu_k$ that 
	\begin{align*}
	\lambda_1(p) \intO \varphi_p^p \, dx &+ \mu_k \intO u_k^{q-p} \varphi_p^p \, dx \\
	&=
	\intO |\nabla u_k|^{p-2} \nabla u_k \nabla \left(\frac{\varphi_p^{p}}{u_k^{p-1}}\right)  dx
	+
	\intO |\nabla u_k|^{q-2} \nabla u_k \nabla \left(\frac{\varphi_p^{p}}{u_k^{p-1}}\right) dx \\
	&\leq 
	\intO |\nabla \varphi_p|^p \, dx 
	+ 
	\intO |\nabla \varphi_p|^{q-2} \nabla \varphi_p \nabla \left(\frac{\varphi_p^{p-q+1}}{u_k^{p-q}}\right) dx,
	\end{align*}
	which implies that
	\begin{equation}\label{eq:need10}
	\mu_k \intO u_k^{q-p} \varphi_p^p \, dx \leq \intO |\nabla \varphi_p|^{q-2} \nabla \varphi_p \nabla \left(\frac{\varphi_p^{p-q+1}}{u_k^{p-q}}\right) dx.
	\end{equation}
	Notice that both sides of \eqref{eq:need10} share the same homogeneity with respect to $u_k$, and hence we can replace $u_k$ by the normalized function $v_k$:
	\begin{equation}\label{eq:need1}
	\mu_k \intO v_k^{q-p} \varphi_p^p \, dx \leq \intO |\nabla \varphi_p|^{q-2} \nabla \varphi_p \nabla \left(\frac{\varphi_p^{p-q+1}}{v_k^{p-q}}\right) dx.
	\end{equation}
	The convergence $v_k \to \varphi_p$ in $C_0^1(\overline{\Omega})$ along a sub-subsequence yields the existence of 
	a constant $C>0$ such that $\varphi_p(x)\le C v_k(x)$ for all $x\in\Omega$ and all sufficiently large $k \in \mathbb{N}$.
	Therefore, since $0<\frac{\varphi_p}{v_k} \le C$ in $\Omega$, and $\frac{\varphi_p}{v_k} \to 1$ pointwise in $\Omega$, 
	the Lebesgue dominated convergence theorem guarantees 
	\begin{equation*}\label{eq:1exist2}
	\intO v_k^{q-p}\varphi_p^p\,dx \to \intO \varphi_p^q\,dx 
	\quad \text{and} \quad 
	\intO |\nabla \varphi_p|^{q-2}\nabla \varphi_p\nabla\left(\dfrac{\varphi_p^{p-q+1}}{v_k^{p-q}}\right)\,dx\to 
	\intO |\nabla \varphi_p|^q\,dx. 
	\end{equation*}
	Here, the latter convergence can be easily seen from the expansion
	\begin{align*}
	&\intO |\nabla \varphi_p|^{q-2} \nabla \varphi_p \nabla \left(\frac{\varphi_p^{p-q+1}}{v_k^{p-q}}\right) dx
	\\
	&=
	(p-q+1) \intO |\nabla \varphi_p|^q \, 
	\left(\frac{\varphi_p}{v_k}\right)^{p-q} \, dx
	-
	(p-q) \intO |\nabla \varphi_p|^{q-2} \nabla \varphi_p \nabla v_k \, 
	\left(\frac{\varphi_p}{v_k}\right)^{p-q+1} \, dx. 
	\end{align*}
	Thus, letting $k \to +\infty$ in \eqref{eq:need1}, we obtain 
	$$
	\limsup_{n\to +\infty}\mu_n=\lim_{k\to +\infty}\mu_k\le 
	\frac{\int_\Omega |\nabla \varphi_p|^q \, dx}{\int_\Omega \varphi_p^q \, dx}=\beta_*. 
	$$
	Consequently, the proof of the assertion \ref{prop:behaviour:1} is complete.
	
	\medskip
	\ref{prop:behaviour:2}
	Let $\|\nabla u_n\|_p \to 0$ as $n \to +\infty$.
	This implies that $u_n \to 0$ a.e.\ in $\Omega$. Consequently, by Egorov's theorem, $u_n \to 0$ uniformly on some subset of $\Omega$ of positive measure, which yields
	$$
	\intO u_n^{q-p}\varphi_q^p\,dx \to +\infty
	\quad \text{as}~ n \to +\infty.
	$$
	Thus, using the Picone inequalities \eqref{eq:picone0} and \eqref{eq:piconeIlx} as in \eqref{eq:pic1115}, we get \eqref{eq:1}:
	$$
	(\mu_n-\lambda_1(q))\intO u_n^{q-p}\varphi_q^p\,dx 
	\le \intO |\nabla \varphi_q|^p\,dx-\lambda_1(p)\intO \varphi_q^p\,dx < +\infty,
	$$
	and therefore $\limsup\limits_{n\to +\infty}\mu_n\le \lambda_1(q)$. 
	Recalling now that $\liminf\limits_{n\to +\infty}\mu_n \ge \lambda_1(q)$, we finish the proof of the assertion \ref{prop:behaviour:2}.
\end{proof} 

\begin{proof}[Proof of Theorem \ref{thm:summary}] 
	First, we recall that \eqref{eq:De} has a positive solution if $\mu\in (\lambda_1(q),\beta_*)$, see \cite[Theorem 2.5 (i)]{BobkovTanaka2017}. 
	Therefore, $\widetilde{\mu}\ge \beta_*$. 
	Let $\{\mu_n\}$ be a sequence convergent to $\widetilde{\mu}$ such that \eqref{eq:De} with $\mu=\mu_n$ has a positive solution $u_n$. 
	Fixing any $v \in C_0^1(\overline{\Omega})$ and applying Picone's inequalities \eqref{eq:piconeB} and \eqref{eq:picone0}, we get from \eqref{eq:De} with $\mu=\mu_n$ that
	\begin{align}
	\notag
	\lambda_1(p) \intO u_n^{p-q} v^q \, dx & + \mu_n \intO v^q \, dx \\
	\notag
	&=
	\intO |\nabla u_n|^{p-2} \nabla u_n \nabla \left(\frac{v^{q}}{u_n^{q-1}}\right)  dx
	+
	\intO |\nabla u_n|^{q-2} \nabla u_n \nabla \left(\frac{v^{q}}{u_n^{q-1}}\right) dx \\
	\label{eq:thm:summary:1}
	&\leq 
	\frac{q}{p} \intO |\nabla v|^{p} \, dx + \frac{p-q}{p}\intO|\nabla u_n|^{p} \, dx 
	+ 
	\intO |\nabla v|^{q} \, dx.
	\end{align}
	If we suppose that $\widetilde{\mu}=+\infty$, then Proposition \ref{prop:behavior} \ref{prop:behaviour:1} guarantees the boundedness of $\|\nabla u_n\|_p$, whence we get a contradiction to \eqref{eq:thm:summary:1}. Thus, $\widetilde{\mu}<+\infty$. 
	
	By the definition of $\widetilde{\mu}$, \eqref{eq:De} has no positive solution for any $\mu>\widetilde{\mu}$. Moreover, there is no nonzero solution if $\mu \leq \lambda_1(q)$, see \cite[Proposition 1]{BobkovTanaka2015} and \cite[Proposition 13]{BobkovTanaka2017}. Furthermore, \cite[Theorem 2.2 (i)]{BobkovTanaka2015} implies that \eqref{eq:De} has at least one positive solution for any $\mu \in (\lambda_1(q),\widetilde{\mu})$.
	
	It remains to prove that if $\widetilde{\mu}>\beta_*$, then \eqref{eq:De} with $\mu=\widetilde{\mu}$ possesses a positive solution. 
	Choose a sequence $\{\mu_n\}$ such that $\mu_n \to \widetilde{\mu}$ and \eqref{eq:De} with $\mu=\mu_n$ has a positive solution $u_n$. 
	Proposition \ref{prop:behavior} \ref{prop:behaviour:1} guarantees the boundedness of 
	$\|\nabla u_n\|_p$. 
	So, we may assume, by passing to a subsequence, that $u_n$ converges to some nonnegative function $u_0$ weakly in $\W$ 
	and strongly in $L^p(\Omega)$. 
	Letting $n\to +\infty$ in 
	\begin{align*}
	&\intO |\nabla u_n|^{p-2}\nabla u_n\nabla(u_n-u_0) \,dx 
	+ \intO |\nabla u_n|^{q-2}\nabla u_n\nabla(u_n-u_0) \,dx \\
	&= \lambda_1(p) \intO u_n^{p-1} (u_n-u_0) \, dx 
	+ \mu_n\intO u_n^{q-1} (u_n-u_0) \,dx,
	\end{align*}
	we get 
	$$
	\lim_{n\to +\infty}
	\left(\intO |\nabla u_n|^{p-2}\nabla u_n(\nabla u_n-\nabla u_0) \,dx
	+\intO |\nabla u_n|^{q-2}\nabla u_n(\nabla u_n-\nabla u_0) \,dx
	\right)=0, 
	$$
	which yields the strong convergence $u_n \to u_0$ in $\W$, see \cite[Remark 3.5]{BobkovTanaka2017}. 
	At the same time, since $\widetilde{\mu}>\beta_*>\lambda_1(q)$, Proposition \ref{prop:behavior} \ref{prop:behaviour:2} gives $\|\nabla u_0\|_p=\lim\limits_{n\to +\infty}\|\nabla u_n\|_p>0$, and hence $u_0$ is nonzero. 
	Thus, $u_0$ is a positive solution of \eqref{eq:De} with $\mu=\widetilde{\mu}$.
\end{proof}

\section*{Acknowledgements}
V.~Bobkov was supported by the project LO1506 of the Czech Ministry of Education, Youth and Sports, and by the grant 18-03253S of the Grant Agency of the Czech Republic.
M.~Tanaka was supported by JSPS KAKENHI Grant Number JP 19K03591. 
The authors would like thank the anonymous reviewers whose comments and suggestions helped to improve and clarify the manuscript.

\addcontentsline{toc}{section}{\refname}
\small

\end{document}